\newtheorem{theorem}{Theorem}[section]%
\newtheorem{lemma}[theorem]{Lemma}%
\newtheorem{remark}[theorem]{Remark}%
\begin{document}
	
	%\title[Sample title]{Sample Title:\\with Forced Linebreak}
	% Force line breaks with \\
	\title{Macroscopic and edge behavior of a planar jellium}
	
	\author{Djalil Chafaï}%
    \address[DC]{CEREMADE, Université Paris-Dauphine, PSL University, France.}
	\email{{djalil(at)chafai.net}}
	\urladdr{\url{http://djalil.chafai.net/}}
	
	\author{David García-Zelada}%
	\address[DGZ]{Institut de Mathématiques de Marseille, Aix-Marseille Université, France.}
	\email{{david.garcia-zelada(at)univ-amu.fr}}
	\urladdr{\url{https://davidgarciaz.wixsite.com/math}}
	
	\author{Paul Jung}
	\address[PJ]{KAIST, Daejeon, Korea.}
    \email{{pauljung(at)kaist.ac.kr}}
	\urladdr{\url{http://mathsci.kaist.ac.kr/~pauljung/}}
	
	\date{Winter 2020 compiled \today}
	
	\begin{abstract}
      We consider a planar Coulomb gas in which the external potential is
      generated by a smeared uniform background of opposite-sign charge on a
      disc. This model can be seen as a two-dimensional Wigner jellium, not
      necessarily charge-neutral, and with particles allowed to exist beyond
      the support of the smeared charge. The full space integrability
      condition requires low enough temperature or high enough total smeared
      charge. This condition does not allow at the same time, total
      charge-neutrality and determinantal structure. The model shares
      similarities with both the complex Ginibre ensemble and the
      Forrester--Krishnapur spherical ensemble of random matrix theory. In
      particular, for a certain regime of temperature and total charge, the
      equilibrium measure is uniform on a disc as in the Ginibre ensemble,
      while the modulus of the farthest particle has heavy-tailed fluctuations
      as in the Forrester--Krishnapur spherical ensemble. We also touch on a
      higher temperature regime producing a crossover equilibrium measure, as
      well as a transition to Gumbel edge fluctuations. More results in the
      same spirit on edge fluctuations are explored by the second author
      together with Raphael Butez.
	\end{abstract}
	\keywords{%
		Coulomb gas%
		; jellium%
		; Ginibre ensemble%
		; Forrester--Krishnapur spherical ensemble%
		; large deviation principle%
		; Gumbel law%
		; heavy tail%
		; determinantal point process.}
	\maketitle

\section{Introduction}
Coulomb gases are systems of charged particles, all of the same sign, where the pair potential between particles is of Coulomb type. If the space in which the particles live is not compact, then an external potential is required to
confine the particles from `going off to infinity', since they are all of the same sign.
Wigner jelliums are Coulomb gases for which this external potential is precisely the Coulomb
potential generated by a charged background of opposite sign. Typically, for the jellium, one imposes an additional constraint that all particles live in some compact region which is equivalent to having an infinite external potential on the complement of this region.
%Conversely, every Coulomb gas can be seen as a Wigner jellium. In this note,
We study a simple planar jellium obtained using a uniform background on a
centered disc, but where the particles are not confined to live on this disc. Our analysis reveals that this model, seen as a Coulomb gas,
cannot be both charge-neutral and determinantal. Moreover, this
model shares similarities with both the Ginibre and the spherical model, see
Figure \ref{fi:conf}. In particular, it has a uniform equilibrium on the
disc and has heavy-tailed fluctuations at the edge. 
%We study various regimes, revealing interesting transitions and open questions.

The rest of the introduction is devoted to, firstly, a more precise description of the Wigner jellium and its link to the Coulomb gas, and secondly,  the main results concerning the particular unconfined jellium model that we study. Section \ref{se:cg} is devoted to general notions and facts on planar potential
theory including planar Coulomb gases. It gathers the main known examples and
results needed to understand our main results. Section \ref{se:proofs} is
devoted to the proofs of our main results. Finally Section \ref{se:hist}
provides some historical remarks on the jellium and Coulomb gases.

\begin{figure}[htbp]
  \centering
  \begin{tikzpicture}
    \draw[->] (0,0) -- (5,0) node[right] {$|x|$};
    \draw[->] (0,-1) -- (0,2) node[above] {$V(x)=-\frac{\alpha}{n}U_\rho(x)$};
    \draw[domain=0:1,smooth,variable=\x,blue,very thick] plot
    ({\x},{(\x^2-1)/2});
    \draw[domain=1:5,smooth,variable=\x,blue,very thick] plot
    ({\x},{ln(\x)});
    \draw (0,0) node [left] {$0$};
    \draw (1,0) node [below] {$1$};
    \draw (0,-.5) node [left] {$-\frac{\alpha}{2n}$};
  \end{tikzpicture}
  \caption{Plot of the external potential $V=-\frac{\alpha}{n}U_\rho$ used later in Lemma
    \ref{le:conf} when the radius of the disc is one. Here, $n$ is the number of particles and $\alpha$ is total charge of the opposite-signed background. In the neighborhood of the origin, the behavior
    is quadratic just like the potential of the Ginibre ensemble, while
    outside this neighborhood, the behavior is logarithmic just like the
    potential of the spherical ensemble. \label{fi:conf}}
\end{figure}
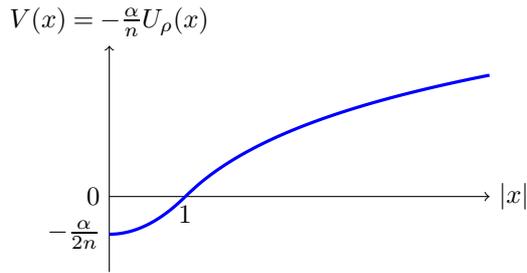

\subsection{Wigner jelliums as Coulomb gases}

Following Wigner \cite{TF9383400678}, let us consider $n$ unit negatively
charged particles (electrons) at positions $x_1,\ldots,x_n$ in
$\mathbb{C}=\mathbb{R}^2$, lying in a positive background of total charge
$\alpha>0$ smeared according to a probability measure $\rho$ on $\mathbb{C}$
with finite Coulomb energy $c=\mathcal{E}(\rho)$, see \eqref{eq:cE} for a
general definition of $\mathcal{E}(\cdot)$. We could alternatively suppose that the particles are
positively charged (ions) and the background is negatively charged
(electrons), this reversed choice would not affect the analysis of the model.
The total energy of the system, counting each pair a single time, is given by
\[
\sum_{i<j}g(x_i-x_j)%
-\alpha\sum_{i=1}^nU_\rho(x_i)%
+\alpha^2c,
\]
where $U_\rho=-(\log\left|\cdot\right|)*\rho$ is the logarithmic potential of
$\rho$. This matches the energy formula \eqref{eq:E} of a Coulomb gas with
$V=-\frac{\alpha}{n}U_\rho$. This observation leads us to define the
\emph{jellium model} on $S\subset\mathbb{C}$ with total background charge $\alpha>0$
and background distribution $\rho$ with $\mathrm{supp}\rho\subset S$ as being
the Coulomb gas \eqref{eq:P} on the full space $\mathbb{C}$, with potential
$V$ given by
\[
V=
\begin{cases}
-\frac{\alpha}{n}U_\rho
&\text{on $S$}\\
+\infty
&\text{on $S^c$}.
\end{cases}
\]
We say that the system is \emph{(charge) neutral} when $\alpha=n$. We say
that it is \emph{uniform} when $\rho$ is the uniform distribution on some
compact subset of $\mathbb{C}$. The great majority of jellium
models studied in the literature are charge-neutral and satisfy
$S=\mathrm{supp}\rho$.

Conversely, from the energy formula above and the inversion formula for the
logarithmic potential \eqref{eq:inv}, a Coulomb gas with sub-harmonic
potential $V$ (meaning $\Delta V\geq0$) could be seen as a jellium as above
with $\alpha\rho=\frac{\Delta V}{2\pi}\mathrm{d}\ell_{\mathbb{C}}$ on
$S=\mathbb{C}$ where $\ell_{\mathbb{C}}$ stands for the Lebesgue measure on
$\mathbb{C}$, but such a $\rho$ is not necessarily a probability measure. When
$V$ is not sub-harmonic then $\rho$ is additionally no longer a positive
measure but we can still interpret it as a background with opposite charge on
$\{\Delta V<0\}$. 
%\tcb{We could think about adding to $V$ a harmonic function
%without modifying the equilibrium measure, but this may affect the behavior at
%the edge. Actually the formula \eqref{eq:UV} shows that $V$ coincides with the
%potential of the associated equilibrium measure on its support.}

The famous example of the complex Ginibre ensemble is a Coulomb gas with
potential $V=\left|\cdot\right|^2$, for which $\Delta V$ is constant, leading
to an interpretation of this Coulomb gas as a degenerate jellium on the full
space $\mathbb{C}$ with Lebesgue background. The beautiful example of the
Forrester--Krishnapur spherical ensemble is a Coulomb gas with potential
$V=(1+1/n)\log(1+\left|\cdot\right|^2)$, for which
$\Delta V=4(1+1/n)/(1+\left|\cdot\right|^2)^2$, leading to an interpretation
of this Coulomb gas as a jellium on the full space with a heavy tailed
background. We can also consider such a background--potential inverse problem
for the one-dimensional log-gases of random matrix theory, which can be seen
as two-dimensional Coulomb gases confined to the real line, such as the
Gaussian Unitary Ensemble. For instance it follows from the discussion in
\cite[Section 1.4]{MR2641363} that the logarithmic potential of the background
measure of total charge $n$ with Lebesgue density
\[
x\mapsto\frac{\sqrt{2n}}{\pi}\sqrt{1-\frac{x^2}{2n}}\mathbf{1}_{|x|<\sqrt{2n}}
\]
is given on the interval $S=[-\sqrt{2n},\sqrt{2n}]$ by
\[
x\mapsto \frac{x^2}{2}+\frac{n}{2}\Bigr(\log\frac{n}{2}-1\Bigr).
\]

\subsection{Model and main results}

In this note, we focus on a very simple planar jellium on the full space
$S=\mathbb{C}$, seen as a Coulomb gas $P_n$ defined by \eqref{eq:P} with
\[
V=-\frac{\alpha}{n} U_\rho,\quad \alpha>0,
\]
where $\rho$ is the uniform probability distribution on the closed centered
disc
\[
D_R=\{z\in\mathbb{C}:|z|\leq R\}
\]
of radius $R>0$. We study the macroscopics and the edge asymptotics of this planar Coulomb
gas. We let $\alpha$ and $\beta$ depend on $n$ and we proceed in an asymptotic
analysis as $n\to\infty$. The potential $V$ depends on $n$. Our analysis
reveals that this special model shares similarities with the complex Ginibre
and the Forrester--Krishnapur spherical ensembles.

We look at (a) when this system is well-defined, and in the case it is
well-defined, (b) global asymptotics at the level of the equilibrium measure,
and (c) edge behavior in the sense of asymptotic analysis of the particle
farthest from the origin.

% Recipe to produce a non-standard edge: add a harmonic function to the
% potential outside a disc. This will not be seen on the equilibrium measure.
%
% How about global fluctuations (CLT) for the one-dimensional Coulomb jellium
% with uniform background? Is it described by the Gaussian Free Field?
% Rider-Virag, Makarov-et-al
%
% In a jellium model with uniform background, the empirical measure goes to
% the Lebesgue measure.

We first need requirements under which the Boltzmann--Gibbs measures exist.
The following lemma says that when the total charge of the background is high
enough, the confinement effect on the gas is strong enough to define the
Boltzmann--Gibbs measure. The condition is natural for Coulomb gases, see for
instance \cite{MR3215627}. Note that the condition does not allow, at the same
time, both charge-neutrality and determinantal structure.

We use the notation $Z_n$ and $P_n$ for the partition function and Gibbs measure of the system of $n$ particles, respectively (see \eqref{eq:Z} and \eqref{eq:P} below for a precise mathematical formulation).

\begin{lemma}[Confinement or integrability condition]\label{le:conf}
  We have
  \[
	Z_n<\infty%
	\quad\text{if and only if}\quad%
	\alpha-n>\frac{2}{\beta}-1.
  \]
  Moreover if this condition holds then $P_n$ is a Coulomb gas with an
  external potential
  \[
	V(x)
	=\frac{\alpha}{2n}\Bigr(\frac{|x|^2}{R^2}-1+2\log R\Bigr)\mathbf{1}_{|x|\leq
      R}+\frac{\alpha}{n}\log|x|\mathbf{1}_{|x|>R}.
	% V(x)=-\frac{\alpha}{n} U_{\rho}(x)%
	% \quad\text{where}\quad
	% U_\rho(x)
	% =\frac{1}{2}\Bigr(1-\frac{|x|^2}{R^2}\Bigr)\mathbf{1}_{|x|\leq
	% R}-\log\frac{|x|}{R}\mathbf{1}_{|x|>R}-\log R.
  \]
  % strongly confining, compactly supported equilibrium measure, edge
  In particular, in the determinantal case $\beta=2$, the condition on
  $\alpha$ reads $\alpha>n$.\\ On the other hand, in the neutral case
  $\alpha=n$, the condition on $\beta$ reads $\beta>2$.
\end{lemma}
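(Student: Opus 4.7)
The plan is to split the proof into two parts: first computing the explicit formula for $V$, then analyzing the integrability of $Z_n$.

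For the formula of $V$, I would exploit rotational symmetry, so that $U_\rho$ is radial. For $|x|>R$, the function $y\mapsto\log|x-y|$ is harmonic on $D_R$, and the mean value property gives $U_\rho(x)=-\log|x|$. For $|x|\le R$, the distributional identity $\Delta U_\rho=-2\pi\rho=-2/R^2$ on $D_R$, combined with the radial ansatz $U_\rho(x)=a+b|x|^2$, forces $b=-1/(2R^2)$, and matching to $-\log|x|$ on $\partial D_R$ fixes $a=1/2-\log R$. Substituting into $V=-\frac{\alpha}{n}U_\rho$ yields the displayed piecewise expression.

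For the integrability, I would write
\[
Z_n=\int_{\mathbb{C}^n}\prod_{i<j}|x_i-x_j|^\beta\prod_{i=1}^n e^{-\beta n V(x_i)}\,\mathrm{d}x_1\cdots\mathrm{d}x_n.
\]
Since $\beta>0$, the Vandermonde carries only positive-power (hence integrable) singularities and $V$ is continuous on $\mathbb{C}$, so the integrand is locally integrable: only the behavior at infinity matters, where $e^{-\beta n V(x)}=|x|^{-\beta\alpha}$. For sufficiency, I would reduce by symmetry to the ordered region $|x_1|\le\cdots\le|x_n|$ and use $|x_i-x_j|\le 2|x_j|$ to obtain $\prod_{i<j}|x_i-x_j|^\beta\le C_n\prod_{j=1}^n|x_j|^{\beta(j-1)}$. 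The integral then bounds above by a product of single-particle integrals, each of which behaves near infinity like $\int r^{\beta(j-1-\alpha)+1}\,\mathrm{d}r$; these all converge iff $\beta(\alpha-j+1)>2$ for every $j$, with the tightest constraint at $j=n$, reading $\alpha-n>2/\beta-1$.

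For the converse, I would lower bound $Z_n$ by restricting the integration to $\{|x_1|,\ldots,|x_{n-1}|\le R/2\}\cap\{|x_n|>2R\}$. There $|x_n-x_i|\ge|x_n|/2$ for $i<n$, giving $\prod_{i<n}|x_n-x_i|^\beta\ge c|x_n|^{\beta(n-1)}$, while the $(x_1,\ldots,x_{n-1})$-integral over the compact disc contributes a strictly positive constant (by continuity and the fact that the remaining Vandermonde is nonzero off its diagonal). What is left is the radial integral $\int_{2R}^\infty r^{\beta(n-1-\alpha)+1}\,\mathrm{d}r$, which diverges exactly when $\alpha-n\le 2/\beta-1$, establishing the iff. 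The particular cases $\alpha>n$ for $\beta=2$ and $\beta>2$ for $\alpha=n$ are immediate rewritings of the condition, and the main (minor) obstacle is confirming positivity of the bounded-region integral in the necessity step.
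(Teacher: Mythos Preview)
Your proof is correct. The computation of $V$ and the necessity argument match the paper's essentially line for line: both fix $n-1$ particles in a compact region and send the last one to infinity, extracting the critical power $|x_n|^{-\beta(\alpha-n+1)}$.

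For sufficiency there is a minor methodological difference. The paper's main proof introduces the symmetrized kernel $G(x,y)=g(x-y)+W_\rho(x)+W_\rho(y)$ with $W_\rho=-U_\rho$, observes that $G$ is bounded below, and rewrites
\[
\mathrm{e}^{-\beta E_n}=\mathrm{e}^{-\beta\sum_{i<j}G(x_i,x_j)}\prod_{i=1}^n\mathrm{e}^{-\beta(\alpha-n+1)W_\rho(x_i)},
\]
so that integrability reduces immediately to the one-body condition $\int\mathrm{e}^{-\beta(\alpha-n+1)W_\rho}<\infty$. This decomposition is reused verbatim in the proofs of the large deviation theorems later in the paper, which is its real payoff. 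Your route---ordering the moduli and using $|x_i-x_j|\le 2|x_j|$ to bound the Vandermonde by $\prod_j|x_j|^{\beta(j-1)}$---is the more elementary alternative that the paper records separately as a remark (there via the symmetric bound $|a-b|\le(1+|a|)(1+|b|)$ instead of ordering). Both work; the $G$-decomposition is slightly cleaner and sets up subsequent arguments, while yours is self-contained.
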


The proof of Lemma \ref{le:conf} is given in Section \ref{se:le:conf} and a plot of $V$ is provided in Figure \ref{fi:conf}.

Note that the condition does not depend on $R$.
Also note that the potential matches the one of the Ginibre ensemble when restricted to the
disc of radius $R$, while it is similar to the one of the spherical ensemble
when restricted to the region outside of the disc of radius $R$.

\medskip

We use the notation $X_n=(X_{n,1},\ldots,X_{n,n})$ and $\mu_{X_n}$ for the random vector of particle locations and the corresponding empirical distribution, respectively (see \eqref{eq:muXn} below).

\begin{theorem}[First order global asymptotics: low temperature regime]
	\label{th:low}
	Suppose that both $\alpha=\alpha_n$ and $\beta=\beta_n$ may depend on $n$ in
	such a way that
	\[
	\lim_{n\to \infty} n\beta_n = \infty
	\quad\text{and}\quad
	\lim_{n \to \infty} \frac{\alpha_n}{n} = \lambda\ge 1,
	\]
	and, if $\lambda=1$, that $\alpha_n-n>\frac{2}{\beta_n}-1$ for $n$ large
    enough. Then for $n$ large enough $Z_n<\infty$, and $P_n$ is well-defined.
    Moreover, regardless of the way we define the sequence of probability
    measures ${(P_n)}_n$ on the same probability space, we have that almost
    surely,
	\[
	\lim_{n \to \infty}\mathrm{d}_{\mathrm{BL}} (\mu_{X_n}, \mu_*) = 0,
	\]
	where $\mu_*$ is the uniform distribution on $D_{R/\sqrt \lambda}$.
\end{theorem}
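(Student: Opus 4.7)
The plan is to identify the candidate limit $\mu_*$ as the minimizer of the limiting mean-field energy functional, and then to quote a low-temperature concentration estimate for planar Coulomb gases at the natural speed $\beta_n n^2$, so that the almost-sure convergence for any coupling follows by Borel--Cantelli.

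First I would observe that $V_n = -\frac{\alpha_n}{n}U_\rho$ converges locally uniformly on $\mathbb{C}$ to $V_\infty := -\lambda U_\rho$, so the relevant limiting energy functional on probability measures of $\mathbb{C}$ is
\[
\mathcal{I}(\mu) = \frac{1}{2}\iint \log\frac{1}{|x-y|}\,\mu(\mathrm{d}x)\mu(\mathrm{d}y) + \int V_\infty\,\mathrm{d}\mu.
\]
The Euler--Lagrange conditions read $U_\mu + V_\infty = c$ on $\mathrm{supp}(\mu)$ and $U_\mu + V_\infty \ge c$ on its complement. Taking Laplacians on the interior of the support gives $\mu = \tfrac{1}{2\pi}\Delta V_\infty = \lambda\rho$, so the candidate has density $\lambda/(\pi R^2)$ on its support. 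The normalization $\mu_*(\mathbb{C}) = 1$ together with strict convexity of $\mathcal{I}$ (which forces $\mu_*$ to inherit the rotational symmetry of $V_\infty$) pins down $\mathrm{supp}(\mu_*) = D_{R/\sqrt{\lambda}}$, which by $\lambda \ge 1$ is contained in $\{\rho > 0\} = D_R$. I would close the identification with an explicit computation of $U_{\mu_*}$ from the classical logarithmic potential of a uniform disc, check equality on $D_{R/\sqrt{\lambda}}$, and verify the inequality on the complement by studying the one-variable function $r \mapsto U_{\mu_*}(r) + V_\infty(r)$; the final check reduces to the elementary bound $\log\lambda \le \lambda - 1$.

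The second step is concentration. A Coulomb gas upper-bound / large-deviation estimate of the type collected in Section~\ref{se:cg} gives, for every $\varepsilon > 0$ and all $n$ large,
\[
P_n\bigl(\mathrm{d}_{\mathrm{BL}}(\mu_{X_n}, \mu_*) > \varepsilon\bigr) \le C\exp\bigl(-c(\varepsilon)\,\beta_n n^2\bigr),
\]
with constants $C, c(\varepsilon) > 0$. Since $n\beta_n \to \infty$ implies $\beta_n n^2 = n\cdot(n\beta_n) \ge n$ for all $n$ large, the right-hand side is summable in $n$, so Borel--Cantelli delivers $\mathrm{d}_{\mathrm{BL}}(\mu_{X_n}, \mu_*) \to 0$ almost surely, regardless of the coupling chosen for the $(P_n)_n$ on a single probability space.

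The main obstacle is that both $V_n$ and $\beta_n$ depend on $n$, so off-the-shelf LDP statements for a fixed external field and fixed inverse temperature do not apply verbatim. The upper-bound half of the LDP (all that concentration requires) must be adapted to the convergence $V_n \to V_\infty$, which is mild here because the two differ only by the scalar factor $\alpha_n/n \to \lambda$ and convergence is uniform on compacts. The truly delicate case is the marginal one $\lambda = 1$: then $V_\infty(x) - \log|x| \to 0$ at infinity and the usual coercivity argument preventing mass from escaping to infinity degenerates. Here the hypothesis $\alpha_n - n > 2/\beta_n - 1$, via Lemma~\ref{le:conf}, provides the quantitative tail control on the modulus of the farthest particle needed to close the concentration estimate and ensure that $\mu_*$ remains the true limit.
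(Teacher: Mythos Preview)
Your identification of $\mu_*$ via the Euler--Lagrange conditions is correct and cleanly done, and the overall scheme (concentration at speed $\beta_n n^2$ followed by Borel--Cantelli) is exactly what the paper also uses, as explained around \eqref{eq:LDP}. For $\lambda>1$ your sketch is essentially sound: the limiting potential $V_\infty=-\lambda U_\rho$ is genuinely confining ($V_\infty(x)-\log|x|\to+\infty$), so standard LDP upper bounds adapt with only cosmetic changes to absorb the multiplicative perturbation $\alpha_n/n\to\lambda$. The paper handles this case by the Gibbs variational method of \cite{david-ihp}, rewriting the energy in terms of $G(x,y)=g(x-y)+W_\rho(x)+W_\rho(y)$ against a fixed reference measure $\sigma\propto\mathrm{e}^{-4W_\rho}$; your black-box invocation of the LDP would reach the same conclusion.

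The real issue is $\lambda=1$, and here your proposal has a gap. You correctly flag that $V_\infty(x)-\log|x|\to0$, so the limiting functional $\mathcal{E}_{V_\infty}$ is only \emph{weakly} confining: it need not have compact level sets, and the off-the-shelf concentration inequality you quote, with constants $C,c(\varepsilon)$ uniform in $n$, is not available. Your remedy is to say that Lemma~\ref{le:conf} ``provides the quantitative tail control on the modulus of the farthest particle''. It does not: Lemma~\ref{le:conf} is a finite-$n$ integrability statement ($Z_n<\infty$) and gives no uniform-in-$n$ bound whatsoever. Indeed, the excess $\gamma_n:=\beta_n(\alpha_n-n+1)$ is only guaranteed to exceed $2$, and may hover arbitrarily close to $2$ along the sequence, so no fixed reference measure of the form $\mathrm{e}^{-\delta W_\rho}\mathrm{d}\ell_{\mathbb C}$ with $\delta>2$ can absorb the tail uniformly.

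The paper's fix is precisely to let the reference measure depend on $n$: set $\sigma_n\propto\mathrm{e}^{-\gamma_n W_\rho}$, which is a probability measure exactly because $\gamma_n>2$, and write $P_n\propto\mathrm{e}^{-n^2\beta_n H_n}\,\sigma_n^{\otimes n}$ with $H_n$ built from $G$. The point is then that the entropic correction $\frac{1}{n^2\beta_n}D(\mu\mid\sigma_n^{\otimes n})$ vanishes in the limit because $\frac{\gamma_n}{n\beta_n}=\frac{\alpha_n-n+1}{n}\to0$, so the limiting variational problem reduces to minimizing $\frac{1}{2}\iint G\,\mathrm{d}\mu\,\mathrm{d}\mu=\mathcal{E}(\mu-\rho)-\mathcal{E}(\rho)$, whose unique minimizer is $\rho$ itself. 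In short: the integrability hypothesis is used not to bound tails directly, but to guarantee that the moving reference $\sigma_n$ is a probability measure; the entropy against $\sigma_n$ then supplies the tightness that a fixed $V_\infty$ cannot. Your write-up would need to incorporate this $n$-dependent reference (or an equivalent device) to close the $\lambda=1$ case.
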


The proof of Theorem \ref{th:low} is given in Section \ref{se:th:low}.

Note that the low temperature regime contains the determinantal case
$\beta=2$.
Also note that the case $\lambda<1$ is useless since Lemma \ref{le:conf} tells us in this
case that $Z_n=\infty$.

\begin{theorem}[First order global asymptotics: high temperature regime]
	\label{th:high}
	% Suppose that $\rho$ is the uniform distribution on the
	% centered disc $\{z\in\mathbb{C}:|z|\leq R\}$.
	Suppose that both $\alpha=\alpha_n$ and $\beta=\beta_n$ may depend on $n$ in
	such a way that
	\[
	\lim_{n\to \infty} n\beta_n = \kappa>0
	\quad\text{and}\quad
	\lim_{n \to \infty}\frac{\alpha_n}{n} = \lambda
	\quad\text{with}\quad
	\kappa(\lambda - 1) > 2.
	\]
	Then for $n$ large enough, $Z_n<\infty$, and $P_n$ is well-defined.
    Moreover, regardless of the way we define the sequence of probability
    measures ${(P_n)}_n$ on the same probability space, we have, almost
    surely,
	\[
	\lim_{n \to \infty} 
	\mathrm{d}_{\mathrm{BL}} (\mu_{X_n}, \mu_*) = 0
	\]
	where $\mu_*$ has a density $\varphi$ that satisfies the following equation
	on its support
	\[
	\Delta\log\varphi%
	= 2\pi\kappa\Bigr(\varphi-\lambda\frac{\mathbf{1}_{|\cdot|\leq R}}{\pi R^2}\Bigr).
	\]
\end{theorem}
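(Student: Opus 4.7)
The plan is to establish a large deviation principle (LDP) for $\mu_{X_n}$ at speed $n$ (in contrast with Theorem \ref{th:low}, whose natural speed is $n^2$), and then to identify the unique minimizer of the resulting rate function via an Euler--Lagrange analysis. First I would check that $Z_n<\infty$ for $n$ large enough: the criterion of Lemma \ref{le:conf} reads $\beta_n(\alpha_n-n)>2-\beta_n$, whose left-hand side tends to $\kappa(\lambda-1)>2$ and whose right-hand side tends to $2$, so the inequality holds for all large $n$. Writing $V_\infty$ for the expression in Lemma \ref{le:conf} with $\alpha/n$ replaced by $\lambda$, one has $V_n\to V_\infty$ uniformly on compacts and $\Delta V_\infty=2\pi\lambda\mathbf{1}_{|\cdot|\leq R}/(\pi R^2)$.

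To extract the LDP I would factor the partition function as $Z_n=C_n^n\int \mathrm{e}^{-\beta_n\sum_{i<j}g(x_i-x_j)}\,\mathrm{d}\nu_n^{\otimes n}$, where $g=-\log|\cdot|$, $C_n=\int \mathrm{e}^{-n\beta_n V_n}\,\mathrm{d}\ell_{\mathbb{C}}$, and $\nu_n=C_n^{-1}\mathrm{e}^{-n\beta_n V_n}\mathrm{d}\ell_{\mathbb{C}}$. The limiting density $\mathrm{e}^{-\kappa V_\infty}$ is integrable, since its tail is $|x|^{-\kappa\lambda}$ with $\kappa\lambda>\kappa+2>2$, so $\nu_n$ converges to $\nu\propto \mathrm{e}^{-\kappa V_\infty}\mathrm{d}\ell_{\mathbb{C}}$. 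Sanov's theorem for $\nu_n^{\otimes n}$ combined with a Varadhan-type tilt by $-\beta_n\sum_{i<j}g(x_i-x_j)\approx -n\tfrac{\kappa}{2}\mathcal{E}(\mu_{X_n})$, and a standard truncation argument to absorb the logarithmic singularity of $g$ on the diagonal, would furnish an LDP for $\mu_{X_n}$ under $P_n$ at speed $n$, with good rate function
\[
I(\mu)=\frac{\kappa}{2}\mathcal{E}(\mu)+\kappa\int V_\infty\,\mathrm{d}\mu+\int\varphi\log\varphi\,\mathrm{d}\ell_{\mathbb{C}}
\]
(up to an additive constant making $\inf I=0$) when $\mu=\varphi\,\mathrm{d}\ell_{\mathbb{C}}$, and $I(\mu)=+\infty$ otherwise.

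The three contributions to $I$ are respectively convex, linear, and strictly convex in $\mu$, so $I$ admits a unique minimizer $\mu_*=\varphi\,\mathrm{d}\ell_{\mathbb{C}}$. Writing the Euler--Lagrange condition $\kappa U_{\mu_*}(x)+\kappa V_\infty(x)+\log\varphi(x)=c$ on $\mathrm{supp}\,\mu_*$ for some Lagrange multiplier $c$, and applying $\Delta$ together with $\Delta U_{\mu_*}=-2\pi\varphi$ and the expression above for $\Delta V_\infty$, yields the PDE in the statement. Almost sure convergence, regardless of the coupling, then follows via Borel--Cantelli from the exponential upper bound $P_n(\mathrm{d}_{\mathrm{BL}}(\mu_{X_n},\mu_*)>\epsilon)\leq \mathrm{e}^{-c(\epsilon)n}$ supplied by the LDP.

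The main technical obstacle I anticipate lies in the LDP itself: one must simultaneously handle the $n$-dependence of both $V_n$ and $\nu_n$, the logarithmic singularity of the Coulomb kernel on the diagonal, and exponential tightness in the unbounded ambient space $\mathbb{C}$. Justifying that $\mathrm{supp}\,\mu_*=\mathbb{C}$, so that the Euler--Lagrange identity holds globally, also requires care; one expects it from the entropy term, which penalizes local vanishing of $\varphi$.
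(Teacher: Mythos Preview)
Your proposal is correct and follows the same overall strategy as the paper: establish a large deviation principle for $\mu_{X_n}$ at speed $n$ in the crossover regime with rate function $\kappa\,\mathcal{E}_{V_\infty}+D(\cdot\mid\ell_{\mathbb{C}})$, read off the Euler--Lagrange equation, take the Laplacian to obtain the stated PDE, and conclude almost sure convergence by Borel--Cantelli.

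The one substantive difference is in the packaging of the LDP. You put the full potential into the reference measure $\nu_n\propto\mathrm{e}^{-n\beta_nV_n}$ and tilt by the raw interaction $g$, leaving the unboundedness of $g(x-y)\to-\infty$ as $|x-y|\to\infty$ as an obstacle to be handled by a separate tightness argument. The paper instead fixes $\delta\in(2,\kappa(\lambda-1))$, keeps only $\mathrm{e}^{-\delta W_\rho}$ in the reference measure, and replaces $g(x-y)$ by $G(x,y)=g(x-y)+W_\rho(x)+W_\rho(y)$, which is bounded below (property~(b) in the proof of Lemma~\ref{le:conf}); the leftover linear term $[\kappa(\lambda-1)-\delta]\int W_\rho\,\mathrm{d}\mu$ is then manifestly confining. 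This is exactly the device that resolves the exponential-tightness issue you flag at the end, and after unpacking $G$ and $D(\cdot\mid\mathrm{e}^{-\delta W_\rho}/Z_\sigma\,\mathrm{d}\ell_{\mathbb{C}})$ the two rate functions coincide. One small slip: your Varadhan tilt should give $\kappa\,\mathcal{E}(\mu)$, not $\tfrac{\kappa}{2}\mathcal{E}(\mu)$; your Euler--Lagrange line already carries the correct coefficient, so this is just a typo in the displayed $I$.
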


The proof of Theorem \ref{th:high} is given in Section \ref{se:th:high}.

\medskip

Our last results concern the fluctuation of the edge, in other words the
modulus of the farthest particle, in the determinantal case $\beta=2$. We
reveal a phase transition with respect to $\lambda$: the fluctuations are
heavy tailed if $\lambda=1$ and light tailed (Gumbel) if $\lambda>1$.

When $\lambda=1$, we know from Theorem \ref{th:low} that the equilibrium
$\mu_*$ is supported in $D_R$. The farthest particle will then ``feel'' $V$
outside $D_R$, which is, according to Lemma \ref{le:conf}, in this region,
logarithmic, and resembles that of the Forrester--Krishnapur spherical
ensemble. We can then expect that the fluctuations of the modulus of the
farthest particle will be then heavy tailed, however the fluctuation law may
differ from that of the spherical ensemble. This intuition is entirely
confirmed by the following theorem.

\begin{theorem}[Heavy-tailed edge]\label{th:edge}
	Suppose that $\beta=2$ and $\alpha=\alpha_n= n + \kappa_n$ with $\kappa_n>0$
	and $\lim_{n \to \infty}\kappa_n=\kappa > 0$, in such a way that in
	particular $\lim_{n\to\infty}\alpha_n/n=\lambda=1$. Then $Z_n<\infty$ by
	Lemma \ref{le:conf} and $P_n$ is well-defined. Moreover
	\[
	\max_{1\leq k\leq n}|X_{n,k}|
	\underset{n\to\infty}{\overset{\mathrm{law}}{\longrightarrow}}
	L
	\]
	where $L$ is the law with cumulative distribution function given by
	\[
        t\in\mathbb{R}\mapsto L((-\infty,t])
        =\prod_{k=0}^\infty
        \Bigr(1-\Bigr(\frac{R}{t}\Bigr)^{2(k+\kappa)}\Bigr)\mathbf{1}_{t\geq R}.
	\]
\end{theorem}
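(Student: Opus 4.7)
The approach exploits both the radial symmetry of $V$ (Lemma~\ref{le:conf}) and the determinantal structure at $\beta=2$. Since the weight $e^{-2nV(x)}$ depends only on $|x|$, the orthogonal polynomials for the associated kernel are the monomials $\{z^k\}_{k=0}^{n-1}$, and Kostlan's theorem for radial determinantal ensembles gives
\[
\{|X_{n,1}|,\ldots,|X_{n,n}|\}\stackrel{d}{=}\{Y_0,\ldots,Y_{n-1}\},
\]
where $Y_0,\ldots,Y_{n-1}$ are mutually independent and $Y_k$ has density proportional to $r^{2k+1}e^{-2nV(r)}$ on $(0,\infty)$. A direct computation using the two-piece formula for $V$ (the substitution $s=\alpha_n r^2/R^2$ on $[0,R]$ yields a lower incomplete gamma function, while the tail $\int_t^\infty r^{2k+1-2\alpha_n}\,dr$ is elementary) then gives, for every $t\ge R$ and with the reindexing $m:=n-1-k$,
\[
\mathbb{P}(Y_{n-1-m}>t)=\frac{(R/t)^{2(\kappa_n+m)}}{1+(\kappa_n+m)\,A_{n-1-m,\alpha_n}},\qquad A_{k,\alpha}:=\frac{e^{\alpha}\gamma(k+1,\alpha)}{\alpha^{k+1}}.
\]

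The main obstacle is showing $A_{n-1-m,\alpha_n}\to 0$ as $n\to\infty$ for each fixed $m$. From the representation $A_{k,\alpha}=\int_0^1(1-u)^k e^{\alpha u}\,du$, the integrand peaks at $u^\star=(\alpha-k)/\alpha=(\kappa_n+m+1)/\alpha_n\to 0$; expanding $k\log(1-u)+\alpha u=(\alpha-k)u-ku^2/2+O(ku^3)$ and rescaling $u=s/\sqrt{k}$ reveals a half-Gaussian shape that yields $A_{k,\alpha}\sim\sqrt{\pi/(2n)}$ by Laplace's method. Equivalently, writing $A_{k,\alpha}=(k!\,e^{\alpha}/\alpha^{k+1})\,\mathbb{P}(\mathrm{Poisson}(\alpha)\ge k+1)$ and combining Stirling's formula with the Poisson CLT gives the same $O(n^{-1/2})$ decay. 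In either presentation $(\kappa_n+m)\,A_{n-1-m,\alpha_n}\to 0$, so each factor $1-\mathbb{P}(Y_{n-1-m}>t)$ tends to $1-(R/t)^{2(\kappa+m)}$.

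To upgrade this termwise convergence to convergence of the full product $\prod_{m=0}^{n-1}\bigl[1-\mathbb{P}(Y_{n-1-m}>t)\bigr]$, I use the uniform domination $\mathbb{P}(Y_{n-1-m}>t)\le(R/t)^{2(\kappa_n+m)}\le(R/t)^{2m}$, which decays geometrically in $m$ when $t>R$; applying dominated convergence to the log-series $\sum_m\log(1-\mathbb{P}(Y_{n-1-m}>t))$ (with terms for $m\ge n$ taken to be $0$) delivers the claimed limit for every $t>R$. The limit CDF is continuous at $t=R$, where it equals $0$, and for $t<R$ the convergence $\mathbb{P}(\max_k|X_{n,k}|\le t)\to 0$ is inherited from Theorem~\ref{th:low} with $\lambda=1$: since the equilibrium measure is the uniform law on $D_R$, particles accumulate arbitrarily close to the boundary. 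Combining these pieces establishes the weak convergence to $L$.
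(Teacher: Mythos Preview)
Your argument is correct and follows the same scheme as the paper: Kostlan's representation, the reindexing $k\mapsto n-1-m$, termwise convergence of the factors $1-\mathbb{P}(Y_{n-1-m}>t)$, and dominated convergence for the product. The one substantive difference is in how you establish the termwise limit. You compute the inner integral over $[0,R]$ explicitly as an incomplete gamma and then use Laplace's method (or the Poisson CLT) to show it is $O(n^{-1/2})$; the paper instead rewrites the full normalising constant as
\[
\int_{\mathbb C}|z|^{-2(m+1)}\,e^{-2\kappa_nW_\rho(z)}\,e^{-2n(W_\rho(z)-\log|z|)}\,\mathrm d\ell_{\mathbb C}(z)
\]
and observes that, since $W_\rho(z)>\log|z|$ strictly on $\{|z|<R\}$ and $W_\rho(z)=\log|z|$ on $\{|z|\ge R\}$, the factor $e^{-2n(W_\rho-\log|\cdot|)}$ converges pointwise to $\mathbf 1_{|z|\ge R}$, so a single dominated-convergence step kills the inner contribution without any special-function asymptotics. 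Your route is more explicit and even yields a rate; the paper's is shorter and uses only the qualitative super-logarithmic behaviour of the potential inside the disc.

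Two small repairs. First, your domination $(R/t)^{2(\kappa_n+m)}\le(R/t)^{2m}$ equals $1$ at $m=0$, which does not control $\log(1-p_{0,n})$. Use instead that $\kappa_n\ge\kappa-\varepsilon>0$ for large $n$, so $(R/t)^{2(\kappa_n+m)}\le(R/t)^{2(\kappa-\varepsilon)}(R/t)^{2m}$, which keeps $p_{0,n}$ bounded away from $1$; this is exactly the paper's ``$\kappa-\varepsilon$'' trick. Second, your appeal to Theorem~\ref{th:low} for $t<R$ is unnecessary: convergence of the CDF at every $t>R$, continuity of $L$, and monotonicity already force $\mathbb P(\max_k|X_{n,k}|\le t)\to 0$ for $t\le R$.
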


The proof of Theorem \ref{th:edge} is given in Section \ref{th:edge}.

Note that the law $L$ in Theorem \ref{th:edge} has a heavy (right) tail. %FIXME: no proof!

Beyond the edge fluctuation, and following \cite[proof of Theorem
2.4]{raphael-david}, it is actually possible to show that the whole
(determinantal) point process converges as $n\to\infty$ to a Bergman point
process with explicit kernel (not related to the \textrm{erfc} special function as in
\cite{hedenmalm-wennman}). However the proof that we give of Theorem
\ref{th:edge} follows a simpler scheme based on
\cite{MR3215627, MR1148410}.

Note that in Theorem \ref{th:edge}, $L$ is supported in $[R,+\infty)$, and the
asymptotic fluctuations at the edge are thus one-sided. In some sense the
background produces here a hard edge.
% This suggests to interpret locally $P_n$ near the edge as a sort of
% half-space jellium as $n\to\infty$.

\medskip

When $\lambda>1$, we know from Theorem \ref{th:low} that the equilibrium
$\mu_*$ is supported in $D_{R/\sqrt{\lambda}}$, which is included in $D_R$.
This suggests that if the farthest particle sticks to the edge of the limiting
support, it will ``feel'' $V$ inside $D_R$, which is, according to Lemma
\ref{le:conf}, quadratic and similar to the potential of a complex Ginibre
ensemble. We can then expect that the fluctuations of the modulus of the
farthest particle will be then light tailed and Gumbel distributed as for the
complex Ginibre ensemble. This is confirmed by our last theorem.

\begin{theorem}[Gumbel edge]\label{th:gumbel}
	Suppose that $\beta=2$ and that $\alpha=\alpha_n$ with
	$\lim_{n\to\infty}\alpha_n/n=\lambda>1$. Then $Z_n<\infty$ by Lemma
	\ref{le:conf} and $P_n$ is well-defined by \eqref{eq:P}. Moreover, if we
	define
	\[
	a_n = \frac{\sqrt {n c_n}}{C_n}\quad\text{and}\quad
	b_n = C_n\Bigr(1+\frac{1}{2}\sqrt {\frac{c_n}{n}}\Bigr)
	\]
	where
	$c_n = \log(n) - 2\log \log (n) - \log(2\pi)$ and
	$C_n=\sqrt{\frac{n}{\alpha_n}}R$,
	then
	\[
	\max_{1\leq k\leq n}|X_{n,k}|
	\underset{n\to\infty}{\overset{\mathbb{P}}{\longrightarrow}}
	\frac{R}{\sqrt{\lambda}}
    \quad\text{and}\quad
    a_n(\max_{1\leq k\leq n}|X_{n,k}|-b_n)%
	\overset{\mathrm{law}}{\underset{n\to\infty}{\longrightarrow}}
	G
	\]
	where $G$ is the Gumbel law with cumulative distribution function
	\[
      t\in\mathbb{R}\mapsto
      G((-\infty,t])=\mathrm{e}^{-\mathrm{e}^{-t}}.
	\]
\end{theorem}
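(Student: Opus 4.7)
The strategy is to use Kostlan's theorem to reduce $\max_k|X_{n,k}|$ to the maximum of independent radii with explicit densities, and then to compare these radii with the Kostlan radii of a standard complex Ginibre ensemble so as to import Rider's edge theorem. In the determinantal case $\beta=2$, the particles have joint density proportional to $\prod_{i<j}|x_i-x_j|^2\prod_i\mathrm{e}^{-2nV(x_i)}$, and the weight $w(r):=\mathrm{e}^{-2nV(r)}$ is rotationally symmetric. Kostlan's theorem then identifies, as unordered tuples, $(|X_{n,1}|,\dots,|X_{n,n}|)\overset{\mathrm{law}}{=}(R_0,\dots,R_{n-1})$ with the $R_k$ independent and $R_k$ of density proportional to $r^{2k+1}w(r)$. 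Using Lemma~\ref{le:conf}, $w(r)$ equals, up to a positive constant, $\mathrm{e}^{-\alpha r^2/R^2}$ on $[0,R]$ and $r^{-2\alpha}$ on $(R,\infty)$.

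Next, I would introduce independent companions $\tilde R_k$ whose density is proportional to $r^{2k+1}\mathrm{e}^{-\alpha r^2/R^2}$ on all of $(0,\infty)$, so that $(\alpha_n/R^2)\tilde R_k^2\sim\mathrm{Gamma}(k+1,1)$; these are exactly the Kostlan radii of a rescaled complex Ginibre ensemble. Because the densities of $R_k$ and $\tilde R_k$ are proportional on $[0,R]$, for every $t\le R$ one has
\[
P\bigl(\max_k R_k\le t\bigr)=P\bigl(\max_k\tilde R_k\le t\bigr)\prod_{k=0}^{n-1}\frac{P(R_k\le R)}{P(\tilde R_k\le R)}.
\]
Using Cram\'er-type large deviations for the Gamma law together with $\alpha_n/n\to\lambda>1$, one shows that $P(\tilde R_k>R)\le\mathrm{e}^{-nI(\lambda)+o(n)}$ with $I(\lambda)=\lambda-1-\log\lambda>0$, uniformly in $k\le n-1$, and a direct computation of the ratio of the two pieces of $\int_0^\infty r^{2k+1}w(r)\,\mathrm{d}r$ yields the same exponential bound for $P(R_k>R)$. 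Hence $\sum_k P(R_k>R)+\sum_k P(\tilde R_k>R)\to 0$, which simultaneously forces the correcting product above to tend to $1$ and shows that $P(\max_k R_k>R)$ and $P(\max_k\tilde R_k>R)$ vanish; consequently $\max_k R_k$ and $\max_k\tilde R_k$ have the same limit in law.

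Setting $Y_k:=(\sqrt{\alpha_n}/R)\tilde R_k$, the variables $Y_k$ are independent with $Y_k^2\sim\mathrm{Gamma}(k+1,1)$ and $\max_k\tilde R_k=(C_n/\sqrt n)\max_k Y_k$. Applying Rider's Gumbel edge theorem for the complex Ginibre ensemble to $\max_k Y_k$ and rearranging using the definitions of $a_n,b_n$ (which are precisely the Ginibre centering and scaling transported through the factor $C_n/\sqrt n$), one obtains $a_n(\max_k\tilde R_k-b_n)\to G$ in law; combined with the comparison of the previous paragraph this gives the announced Gumbel limit for $\max_k|X_{n,k}|$. The convergence in probability $\max_k|X_{n,k}|\to R/\sqrt\lambda$ is then automatic, since $b_n\to R/\sqrt\lambda$ and $a_n\to\infty$.

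The main technical obstacle is the second step: one must show quantitatively that the non-Gaussian, power-law portion of $w$ on $(R,\infty)$, which is precisely what dominates the edge behaviour in the critical case $\lambda=1$ treated in Theorem~\ref{th:edge}, has negligible effect on the fluctuations as soon as $\lambda>1$. This relies crucially on the strict inequality $\lambda>1$ through the positivity of Cram\'er's rate function $I(\lambda)$; the Ginibre edge theorem can then be invoked as a black box.
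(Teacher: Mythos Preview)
Your argument is correct and is essentially the same proof as in the paper, carried out at a different level of abstraction. Both proofs rest on the single observation that the jellium weight $\mathrm{e}^{-2nV}$ and the Ginibre weight $\mathrm{e}^{-\alpha_n|\cdot|^2/R^2}$ are proportional on $D_R$, so that the two models agree after conditioning on $\{\max_k|X_{n,k}|\le R\}$; the whole point is then to show this conditioning is asymptotically trivial when $\lambda>1$, and to import Rider's Ginibre edge theorem.

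The paper phrases the comparison at the level of the Gibbs measures: for $A\subset D_R^n$ one has $P_n(A)=(Z_n^{\mathrm{Gin}}/Z_n)P_n^{\mathrm{Gin}}(A)$, and the key Lemma $Z_n^{\mathrm{Gin}}/Z_n\to1$ is obtained by quoting an external concentration estimate (Ameur's localization theorem) for $P_n(D_R^n)\to1$. You phrase the same comparison after Kostlan: your correcting product $\prod_k P(R_k\le R)/P(\tilde R_k\le R)$ is exactly $Z_n^{\mathrm{Gin}}/Z_n$, and you prove it tends to~$1$ by elementary Chernoff bounds on Gamma tails, using that $I(\lambda)=\lambda-1-\log\lambda>0$. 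Your route is therefore more self-contained, avoiding the external reference at the cost of a short explicit computation; the paper's route is slicker but relies on a black box. Either way the Gumbel conclusion is then transferred from the complex Ginibre ensemble exactly as you describe.
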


The proof of Theorem \ref{th:gumbel} is given in Section \ref{se:th:gumbel}.

It is worth mentioning that following \cite{david-edge}, one can pass from the
heavy tailed law $L$ of Theorem \ref{th:edge} to the Gumbel light tailed law
$G$ of Theorem \ref{th:gumbel}. Namely, if for each $\kappa>0$, $\xi_{\kappa}$
is a random variable taking values in $[1,+\infty)$ with cumulative
distribution function
\[
t\in[1,+\infty)\mapsto
\mathbb P(\xi_{\kappa} \leq t)
= \prod_{k=0}^\infty (1 - t^{-2(k+\kappa)}),
\]
and if $\varepsilon_{\kappa}>0$ is the unique solution of
$\varepsilon_{\kappa}\exp(\kappa \varepsilon_{\kappa})=1$, then
\[
2\kappa\Bigr(\xi_\kappa - 1 -\frac{\varepsilon_{\kappa}}{2}\Bigr) 
\underset{\kappa\to\infty}{\overset{\mathrm{law}}{\longrightarrow}}
\mathrm{Gumbel}.
\]
Furthermore, following \cite{david-edge}, taking $\kappa=+\infty$ in Theorem
\ref{th:edge} leads to Gumbel fluctuations!

A simulation study can be done using the algorithm in \cite{MR3911782}, see
for instance Figure \ref{fi:low}.

\medskip

Note that the edge of one-dimensional models is considered in
\cite{Dhar2018,MR3817495,PhysRevLett.119.060601}.

\begin{figure}[htbp]
	\centering
	\includegraphics[width=.7\textwidth]{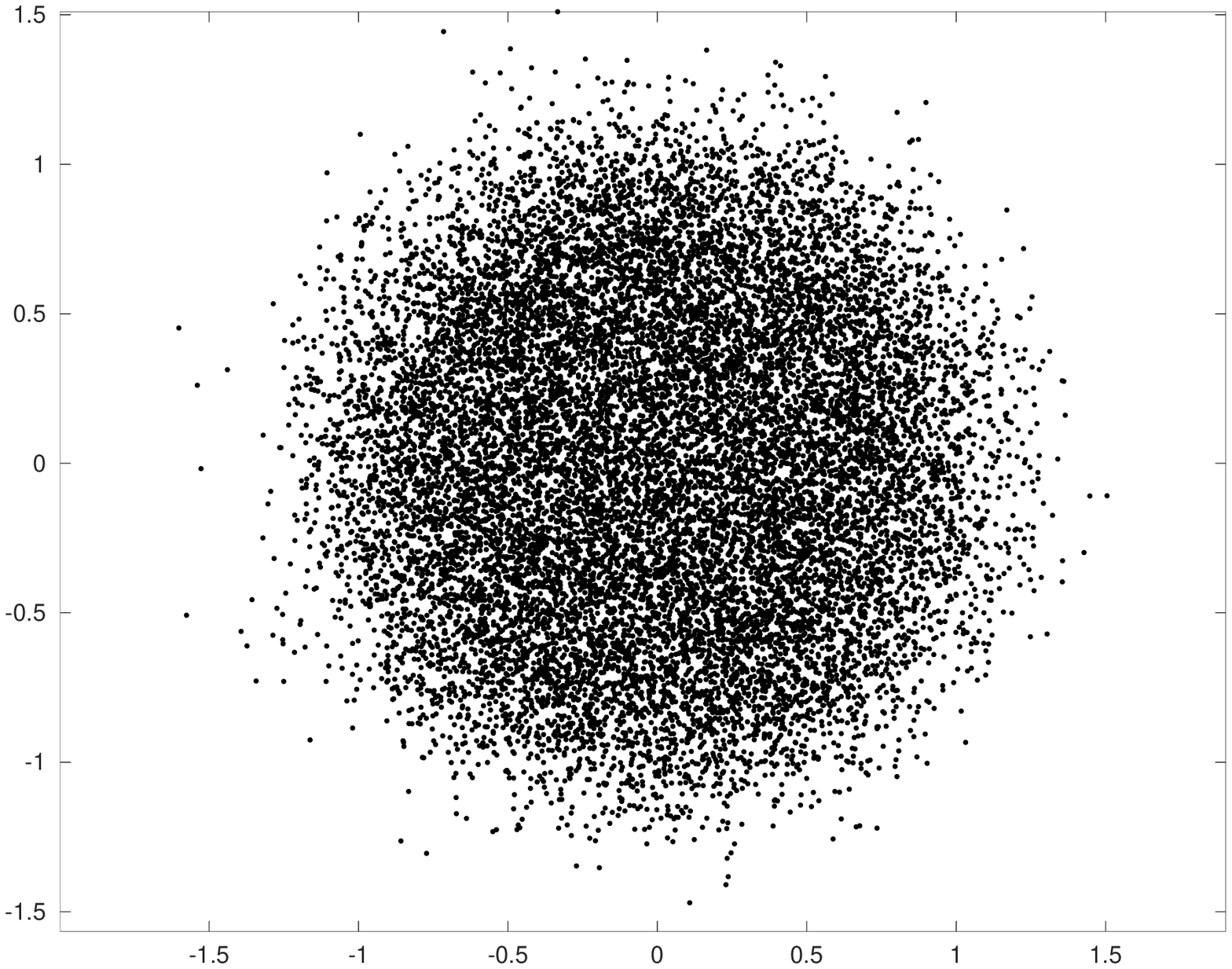}
	\includegraphics[width=.7\textwidth]{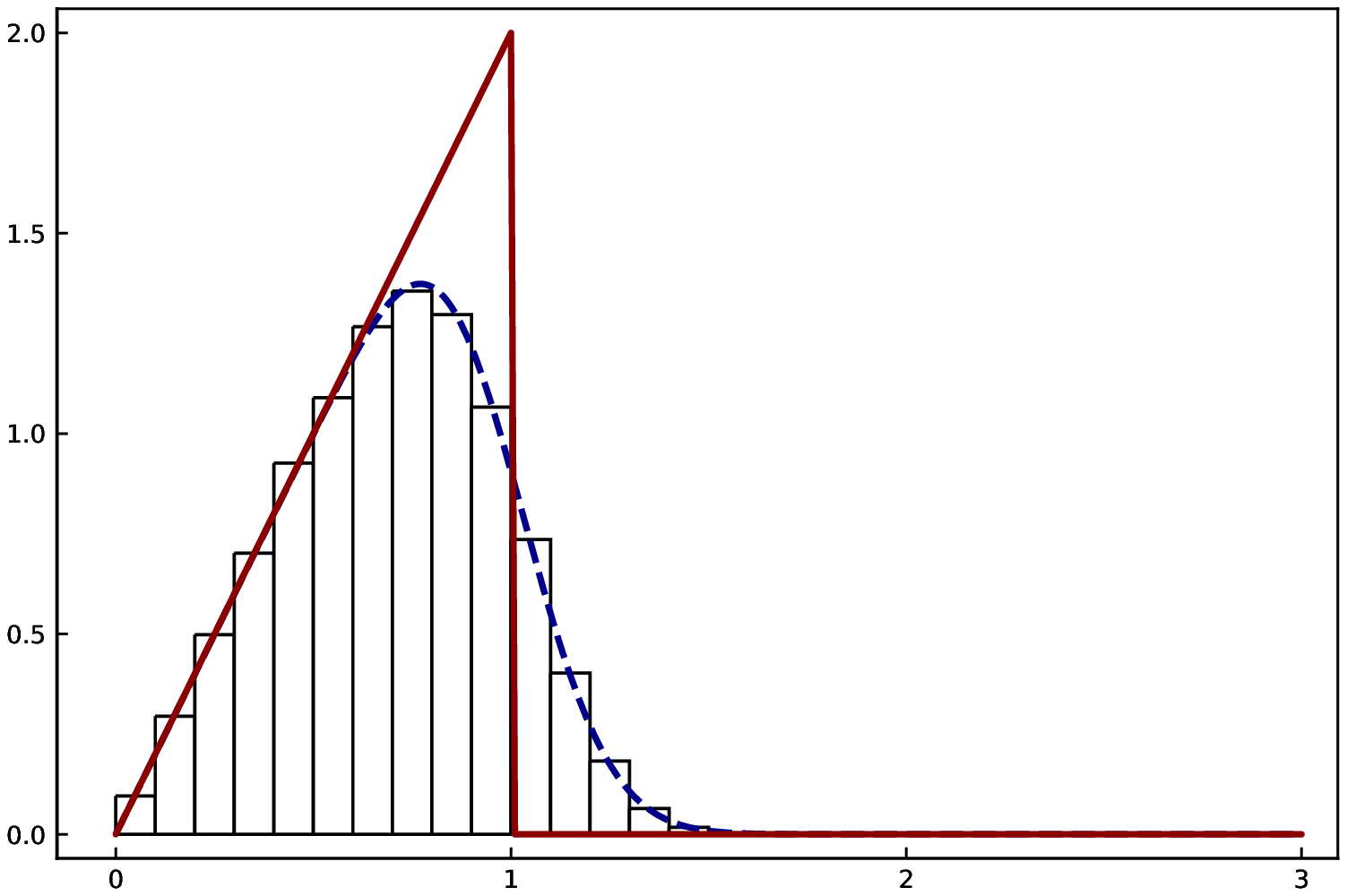}
	\caption{\label{fi:low}The top graphic is a plot of a simulation of
      $X_n\sim P_n$, $n=8$, illustrating Theorem \ref{th:low} and Theorem
      \ref{th:gumbel} in the case $R=2$ and $\lambda=4$. We used the algorithm
      from \cite{MR3911782} with \texttt{dt=.5} and \texttt{T=10e6}. About
      $10$ independent copies were simulated and merged and we retained only
      the last 10\% of the trajectories. The bottom graphic shows a histogram
      of the radii of the same data together with the non asymptotic radial
      density for the complex Ginibre ensemble (dashed line, exact formula
      from determinantal structure) and radial density of equilibrium measure
      (solid line).}
\end{figure}

Let us end this introduction with two open problems. We believe there should be a version of Theorem \ref{th:high} in the critical case $\kappa(\lambda - 1) = 2$, but
	it is unclear for us that the functional used in our proof of Theorem
	\ref{th:high} is well-defined in this case. For criticality, the proof of the
	large deviation principle may require a special proof. We should have
	$-\int \Delta \log\varphi\mathrm{d}\ell_{\mathbb C}=4\pi$. With the
	Gauss--Bonnet formula in mind, seeing $-\Delta \log\varphi$ as a curvature
	suggests a space of Euler characteristic one, which could be thought as the
	unit disc. A second open problem is to investigate the density $\varphi$ of Theorem \ref{th:high} as $\kappa\to\infty$, in other words, as the high temperature regime approaches the low temperature regime. The question here is to elucidate how or in what sense the density approaches the uniform distribution on $D_{R/\sqrt \lambda}$.

\section{Essential facts on planar potential theory}\label{se:cg}

This section gathers useful elements of two-dimensional potential theory. We
refer, for instance, to \cite{MR3308615,MR0350027,MR1817225,MR2908617} for
more details on the basic aspects of potential theory used in this note. The
Coulomb kernel $g$ in dimension $2$ is given on
$x\in\mathbb{C}=\mathbb{R}^2\setminus\{0\}$, by
\[
  g= -\log\left|\cdot\right|.
\]
It belongs to $\mathrm{L}^1_{\mathrm{loc}}(\ell_{\mathbb{C}})$ and constitutes
the fundamental solution of the Laplace or Poisson equation, namely
$\Delta g=-2\pi\delta_0$ in the sense of Schwartz distributions on
$\mathbb{R}^2$. In particular $g$ is super-harmonic, and harmonic on
$\mathbb{R}\setminus\{0\}$. The Coulomb potential at point $x\in\mathbb{C}$
generated by a distribution of charges (say electrons) modeled by a
probability measure $\mu$ on $\mathbb{C}$ such that
$g\mathbf{1}_{K^c}\in\mathrm{L}^1(\mu)$ for some large enough compact set $K$
is defined by
\begin{equation}\label{eq:U}
U_\mu(x)=(g*\mu)(x)=\int g(x-y)\mathrm{d}\mu(y) \in(-\infty,+\infty].
\end{equation}
We have $U_\mu\in\mathrm{L}^1_{\mathrm{loc}}(\ell_{\mathbb{C}})$ and the identity
$\Delta g=-2\pi\delta_0$ gives the inversion
formula
\begin{equation}\label{eq:inv}
\Delta U_\mu % = \Delta (g*\mu) = (\Delta g)*\mu = -2\pi\delta_0*\mu
=-{2\pi}\mu.
\end{equation}
In particular $-U_\mu$ is sub-harmonic in the sense that $\Delta U_\mu\geq0$.
The Coulomb (self-interaction) energy of the (distribution of charges) $\mu$
is defined when it makes sense by
\begin{equation}\label{eq:cE}
\mathcal{E}(\mu)
=\frac{1}{2}\iint g(x-y)\mathrm{d}\mu(x)\mathrm{d}\mu(y)
=\frac{1}{2}\int U_\mu(x)\mathrm{d}\mu(x).
\end{equation}
% Note that $U_\mu(x)=+\infty$ if $\mu$ has an atom at point $x$, and in this
% case $\mathbb{E}(\mu)=+\infty$. In the sequel, we will consider only the
% case of absolutely continuous measures with respect to the Lebesgue measure.
A subset of $\mathbb{C}$ has finite capacity when its carries a probability
measure with finite Coulomb energy. When this is not the case, we say that the
set has zero capacity.

Let us denote by $\mathcal{P}(\mathbb{C})$ the set of probability measures on
$\mathbb{C}$, equipped with the topology of weak convergence with respect to
continuous and bounded test functions, and its associated Borel
$\sigma$-field. This topology is metrized by the bounded-Lipschitz metric
\[
\mathrm{d}_{\mathrm{BL}}(\mu,\nu)%
=\sup\Bigr\{
\int f\mathrm{d}(\mu-\nu):\|f\|_\infty\leq1,\|f\|_{\mathrm{Lip}}\leq1
\Bigr\}
\]
where $f:\mathbb{C}\to\mathbb{R}$ is measurable, $\|f\|_\infty=\sup_x|f(x)|$,
$\|f\|_{\mathrm{Lip}}=\sup_{x\neq y}\frac{|f(x)-f(y)|}{|x-y|}$. 

Let $V:\mathbb{C}\to\mathbb{R}\cup\{+\infty\}$ be a lower semi-continuous
function playing the role of an external potential, producing an external
electric field $-\nabla V$. If $V$ grows faster than $g$ at infinity, the
Coulomb energy $\mathcal{E}_V$ with external field is defined by
\[
\mu\in\mathcal{P}(\mathbb{C})
\mapsto
\mathcal{E}_V(\mu)=\mathcal{E}(\mu)+\int V\mathrm{d}\mu.
\]
It is lower semi-continuous with compact level sets, strictly convex, and it
admits a unique minimizer called the \emph{equilibrium measure} or Frostman
measure denoted
\[
\mu_*=\arg\min_{\mathcal{P}(\mathbb{C})}\mathcal{E}_V.
\]
%characterized by the following Euler--Lagrange equations. 
From this variational formula, there exists a constant
$c$ % (modified Robin constant)
such that except on a set of zero capacity, 
\begin{equation}\label{eq:UV}
  \begin{cases}
    U_{\mu_*}+V =c & \text{on the support of $\mu_*$},\\
    U_{\mu_*}+V \geq c&\text{outside}.
  \end{cases}
\end{equation}
In particular $V$ is sub-harmonic on the support of $\mu_*$.
Combined with \eqref{eq:inv}, we get, when $V$ has Lipschitz weak first
derivative, that
\begin{equation}\label{eq:mustar}
\mathrm{d}\mu_*=\frac{\Delta V}{2\pi}\mathrm{d}\ell_{\mathbb{C}}\quad\text{on the support of $\mu_*$}.
\end{equation}
In particular the probability measure $\mu_*$ is supported
in $\{\Delta V\geq0\}$. Furthermore $\mu_*$ is compactly supported when $V$
goes to $+\infty$ at $\infty$ fast enough.

\subsection{Planar Coulomb gases}

A planar Coulomb gas with $n$ particles, potential $V$, and inverse temperature
$\beta\geq0$ is the exchangeable Boltzmann--Gibbs probability measure $P_n$ on
$\mathbb{C}^n$ given by
\begin{equation}\label{eq:P}
\mathrm{d}P_n(x_1,\ldots,x_n)
=\frac{\mathrm{e}^{-\beta E_n(x_1,\ldots,x_n)}}{Z_n}
\mathrm{d}\ell_{\mathbb{C}}(x_1)\cdots\mathrm{d}\ell_{\mathbb{C}}(x_n)
\end{equation}
where
\begin{equation}\label{eq:E}
E_n(x_1,\ldots,x_n)=\sum_{i<j}g(x_i-x_j)+n\sum_{i=1}^nV(x_i)
\end{equation}
and
\begin{equation}\label{eq:Z}
Z_n=\int\mathrm{e}^{-\beta E_n(x_1,\ldots,x_n)}
\mathrm{d}\ell_{\mathbb{C}}(x_1)\cdots\mathrm{d}\ell_{\mathbb{C}}(x_n).
\end{equation}
The Coulomb gas is well defined when $Z_n<\infty$. It models a gas of unit
charged particles, or more precisely a random configuration of
unit charged particles. We should keep in mind that we play here with
electrostatics rather than with electrodynamics (no magnetic field). For all
$n$, we define the random empirical measure
\begin{equation}\label{eq:muXn}
  \mu_{X_n}=\frac{1}{n}\sum_{k=1}^n\delta_{X_{n,k}}
  \quad\text{where}\quad
  X_n=(X_{n,1},\ldots,X_{n,n})\sim P_n.
\end{equation}
The notation $\sim$ means that the random variable $X_n$ has law $P_n$. We
have $\mathbb{P}(\mu_{X_n}\in A)=P_n(\frac{1}{n}\sum_{k=1}^n\delta_{x_k}\in A)$
for any Borel subset $A\subset\mathcal{P}(\mathbb{C})$. In the low temperature
regime $\beta=\beta_n$ with $\lim_{n\to\infty}n\beta_n=\infty$, ${(\mu_{X_n})}_n$
satisfies the following \emph{large deviation principle}: for any Borel subset
$A\subset\mathcal{P}(\mathbb{C})$ with interior $\mathrm{int}(A)$ and closure
$\mathrm{clo}(A)$,
\begin{multline}\label{eq:LDP}
-\inf_{\mathrm{int}(A)}\mathcal{E}_V+\mathcal{E}_V(\mu_*)
\leq\varliminf_{n\to\infty}\frac{\log\mathbb{P}(\mu_{X_n}\in A)}{n^2\beta_n}\\
\leq\varlimsup_{n\to\infty}\frac{\log\mathbb{P}(\mu_{X_n}\in A)}{n^2\beta_n}
\leq-\inf_{\mathrm{clo}(A)}\mathcal{E}_V+\mathcal{E}_V(\mu_*).
\end{multline}
We refer to \cite{MR1746976,MR1606719,MR2760897,MR2926763,MR3262506,david-ihp}
for these large deviation principles for Coulomb gases.

For any Borel measures $\mu$ and $\nu$, we define the Kullback--Leibler
divergence or relative entropy of $\nu$ with respect to $\mu$ by
$D(\nu\mid\mu) =\int\log\frac{\mathrm{d}\nu}{\mathrm{d}\mu}\mathrm{d}\nu$ if
$\nu$ is absolutely continuous with respect to $\mu$, and
$D(\nu\mid\mu)=+\infty$ otherwise.

In the high temperature regime $\beta=\beta_n$ with
$\lim_{n\to\infty}n\beta_n=\kappa\in(0,+\infty)$, the same holds true with
$\mathcal{E}_V$ formally replaced by
\[
  \mathcal{E}_V + \frac{1}{\kappa} D(\cdot\mid \ell_{\mathbb C})
\]
or, equivalently, formally replaced by
\[
  \mathcal{E}+\frac{1}{\kappa} D(\cdot\mid\nu_V)
\]
where $\nu_V$ has a density proportional to $\mathrm{e}^{-\kappa V}$. We also
have to replace $\mu_*$ by the minimizer of
$\mathcal{E}_V + \frac{1}{\kappa} D(\cdot\mid \ell_{\mathbb C})$. This is
known as the \emph{crossover regime}, which interpolates between $\nu_V$ and
the minimizer of $\mathcal {E}_V$. The classical Sanov theorem corresponds
formally to this regime when we turn off the pair interaction by taking $g=0$.
This regime is considered in particular in
\cite{MR1145596,MR1678526,MR2909692,MR3262506,dupuis2015large,david-ihp,akemann-byun}.

From \eqref{eq:LDP}, for all $\varepsilon>0$, we get by taking
$A=\{\mu\in\mathcal{P}(\mathbb{C}):\mathrm{d}_{\mathrm{BL}}(\mu_{X_n},\mu_*)>\varepsilon\}$
that
\[
\sum_n\mathbb{P}(\mathrm{d}_{\mathrm{BL}}(\mu_{X_n},\mu_*)\geq\varepsilon)<\infty,
\]
which is a summable convergence in probability. By the Borel--Cantelli lemma, we obtain something  known in the probabilistic
literature as \emph{complete convergence}, see for instance \cite{MR1632875}.
In particular, regardless of the way we define the random vectors $X_n$ on the
same probability space, we have that almost surely,
\[
\lim_{n\to\infty}\mathrm{d}_{\mathrm{BL}}(\mu_{X_n},\mu_*)=0.
\]

\subsection{Determinantal exact solvability, Ginibre and spherical models}

It is useful to rewrite the density of the Coulomb gas $P_n$ defined in
\eqref{eq:P}, provided that $Z_n<\infty$, as
\begin{equation}\label{eq:det}
\frac{\mathrm{e}^{-\beta n\sum_{i=1}^nV(x_i)}}{Z_n}
\prod_{i<j}|x_i-x_j|^\beta.
\end{equation}
This includes plenty of famous models from random matrix theory including the
following couple of models, and we refer to
\cite{MR0173726,MR2489167,MR2552864,MR2540391,MR2641363,MR3309890,MR3888701}
for more information:

\begin{itemize}
	\item \emph{Complex Ginibre ensemble.} This corresponds to taking
	\[
	\beta=2\quad\text{and}\quad V=\frac{1}{2}\left|\cdot\right|^2.
	\]
	The equilibrium measure is uniform on the unit disc, namely
	\[
	\mathrm{d}\mu_*=\frac{\mathbf{1}_{\left|\cdot\right|\leq1}}{\pi}\mathrm{d}\ell_{\mathbb{C}}
	\]
	in accordance with \eqref{eq:mustar}. This Coulomb gas describes the
	eigenvalues of a Gaussian random complex $n\times n$ matrix $A$ with density
	proportional to $\mathrm{e}^{-\mathrm{Trace}(AA^*)}$ where
	$A^*=\bar{A}^\top$ is the conjugate-transpose of $A$. Equivalently, the
	entries of $A$ are independent and identically distributed with independent
	real and imaginary parts having a Gaussian law of mean $0$ and variance
	$1/(2n)$. This gas also appears in various other places in the
	mathematical physics literature, for instance as the modulus of the wave
	function in Laughlin's model of the fractional quantum Hall effect
	\cite{Laughlin1987}, in the description of the vortices in the
	Ginzburg--Landau model of superconductivity \cite{MR3309890}, and in a model
	of rotating trapped fermions \cite{PhysRevA.99.021602}.
	\item \emph{Forrester--Krishnapur spherical ensemble.} This corresponds to taking
	\[
	\beta=2\quad\text{and}\quad
	V=\frac{n+1}{2n}\log(1+|x|^2).
	\]
	The equilibrium measure is heavy tailed and given by
	\[
	\mathrm{d}\mu_*
	=\frac{1}{\pi(1+\left|\cdot\right|^2)^2}\mathrm{d}\ell_{\mathbb{C}}.
	\]
	The name of this gas comes from the fact that it is the image by the
    stereographical projection of the Coulomb gas on the sphere, with constant
    potential, onto the complex plane. This Coulomb gas describes the
    eigenvalues of $AB^{-1}$ where $A$ and $B$ are two independent copies of
    complex Ginibre random matrices. We can loosely interpret $AB^{-1}$ as a
    sort of matrix analogue of the Cauchy distribution since when $A$ and $B$
    are $1\times1$ matrices, this is precisely a Cauchy distribution.
\end{itemize}

The case $\beta=2$ has a remarkable integrable structure,
called a \emph{determinantal structure}, which provides exact solvability, see
for instance \cite{MR2760897,MR1083764,MR3342661,MR3215627}. More precisely,
if $\beta=2$ then for all $1\leq k\leq n$, the $k$-th dimensional marginal
distribution of the exchangeable probability measure $P_n$, denoted $P_{n,k}$,
has density proportional to
\[
(x_1,\ldots,x_k)\in\mathbb{C}^k\mapsto\det[K_n(x_i,x_j)]_{1\leq i,j\leq k}
\]
where $K_n$ is an explicit kernel which depends on $n$ and $V$. Since
$\mathbb{E}\mu_{X_n}=P_{n,1}$, it follows in particular that the density of
$\mathbb{E}\mu_{X_n}$ is proportional to $x\in\mathbb{C}\mapsto K_n(x,x)$.
Following \cite{MR1148410,MR3215627}, if $\beta=2$ and if $V$ is radially
symmetric, say $V=Q(\left|\cdot\right|)$, then the point process of radii or
moduli (this should be interpreted as a random multi-set)
\[
\{|X_{n,1}|,\ldots,|X_{n,n}|\}
\]
has the same law as the point process $\{Y_{n,1},\ldots,Y_{n,n}\}$ where
$R_{n,1},\ldots,R_{n,n}$ are independent (and not identically distributed)
random variables with $R_k$ of density proportional to
\begin{equation}\label{eq:kostlan}
t\in[0,+\infty)\mapsto%c_{n,k}
t^{2k-1}\mathrm{e}^{-2nQ(t)},
%\quad
%c_{n,k}=\Bigr(\int_0^\infty t^{2k-1}\mathrm{e}^{-2nQ(t)}\mathrm{d}t\Bigr)^{-1},
\quad 1\leq k\leq n.
\end{equation}
Following \cite{MR1986426,MR3215627,MR3615091,david-edge}, this allows for the
asymptotic analysis of the modulus of the farthest particle of the Coulomb gas
as $n\to\infty$. In particular, one can analyze:
\begin{itemize}
	\item\emph{Complex Ginibre ensemble.} For this gas, the equilibrium measure
	has an edge and the modulus of the farthest particle tends to this edge, and
	the fluctuation is described by a Gumbel law. Namely, following
	\cite{MR1986426,MR3215627}, if we define
	\[
	a_n = 2\sqrt {n c_n}
	\quad\text{and}\quad
	b_n = 1 + \frac{1}{2}\sqrt{\frac{c_n}{n}}
	\]
	where
	$c_n = \log(n) - 2\log \log (n) - \log(2\pi)$ then
	\begin{equation}\label{eq:ginibre-edge}
	\max_{1\leq k\leq n}|X_{n,k}|
	\underset{n\to\infty}{\overset{\mathbb{P}}{\longrightarrow}}
	1
	\quad\text{and}\quad  
	a_n(\max_{1\leq k\leq n}|X_{n,k}|-b_n)
	\underset{n\to\infty}{\overset{\mathrm{law}}{\longrightarrow}}
	G
	\end{equation}
	where $G$ is the Gumbel law with cumulative probability distribution
	\begin{equation}\label{eq:gumbelaw}
      t\in\mathbb{R}\mapsto G((-\infty,t])%
      =\mathrm{e}^{-\mathrm{e}^{-t}}.
	\end{equation}
	\item\emph{Forrester--Krishnapur spherical ensemble.} For this gas, the
	modulus of the farthest particle tends to infinity and has a heavy tail.
	Namely, following \cite{MR3215627,MR3615091,MR3761607}.
	\begin{equation}\label{eq:spherical-edge}
	\max_{1\leq k\leq n}|X_{n,k}|
	\underset{n\to\infty}{\overset{\mathbb{P}}{\longrightarrow}}
	+\infty
	\quad\text{and}\quad  
	\frac{1}{\sqrt{n}}\max_{1\leq k\leq n}|X_{n,k}|
	\underset{n\to\infty}{\overset{\mathrm{law}}{\longrightarrow}}
	F
	\end{equation}
	where $F$ is the probability distribution with cumulative distribution
	function
	\begin{equation}\label{eq:sphericaledgelaw}
	t\in\mathbb{R}\mapsto
	F((-\infty,t])=
	\prod_{k=1}^\infty\mathrm{e}^{-t^{-2}}\sum_{j=0}^{k-1}\frac{t^{-2j}}{j!}\mathbf{1}_{t\geq0},
	\end{equation}
	moreover this law is heavy tailed in the sense that
	\[
	1-F((-\infty,t])=F((t,+\infty))\underset{t\to\infty}{\sim} t^{-2}.
	\]
\end{itemize} 

The notation $\overset{\mathrm{law}}{\to}$ and $\overset{\mathbb{P}}{\to}$
stand for convergence in law and in probability respectively.

In random matrix theory and statistical physics, it is customary to speak
about \emph{macroscopic behavior} for $\mu_{X_n}$ and about \emph{edge behavior}
for $\max_{1\leq k\leq n}|X_{n,k}|$.

\section{Proofs}\label{se:proofs}

\subsection{Proof of Lemma \ref{le:conf}}
\label{se:le:conf}

\begin{proof}[Proof of Lemma \ref{le:conf}]
	Following for instance \cite[Ch. 0, Example 5.7 ]{MR1485778}, we have
	\begin{multline}\label{eq:UR}
	U_{\rho}(x) =\frac{1}{\pi R^2}
	\int_0^{2\pi}\int_0^R\log\frac{1}{|x-r\mathrm{e}^{\mathrm{i}\theta}|}r\mathrm{d}r\mathrm{d}\theta\\
	=\frac{1}{2}\Bigr(1-\frac{|x|^2}{R^2}\Bigr)\mathbf{1}_{|x|\leq
		R}-\log\frac{|x|}{R}\mathbf{1}_{|x|>R}-\log R,
	\end{multline}
	which is harmonic outside the support of $\rho$, in
	accordance with \eqref{eq:inv}.
	Now we define 
	\[
	{W_\rho=-U_\rho}
	\quad\text{and}\quad
	G(x,y) = g(x-y) + W_\rho(x) + W_\rho(y)
	\]
	then
	\[\sum_{i<j} G(x_i,x_j)
	=\sum_{i<j}g(x_i-x_j) + 
	(n-1) \sum_{i=1}^n W_\rho(x_i)
	\]
	and
	\begin{align*}
	\mathrm{e}^{-\beta E_n(x_1,\ldots,x_n)}
	&=\mathrm{e}^{-\beta \left( 
		\sum_{i<j}g(x_i-x_j) + \alpha 
		\sum_{i=1}^n W_\rho (x_i) 
		\right)}\\
	&=
	\mathrm{e}^{-\beta \sum_{i<j}G(x_i,x_j)}
	\prod_{i=1}^n
	\mathrm{e}^{-\beta \left(\alpha - n+1 \right)W_\rho(x_i)}.
	\end{align*}
	The idea now is to show that the first exponential in the last display is
	bounded whereas the product of exponentials is integrable. Indeed, we shall
	use the following properties:
	\begin{enumerate}[(a)]
		\itemsep0.1em 
		\item \label{en:Confining} The function
		$x\mapsto\left|W_\rho(x) -\log|x|\right|$ is bounded for $|x| \geq 1$;
		\item \label{en:BoundedBelow} The function $G$ is bounded from below (see
		Remark \ref{rk:conf});
		\item \label{en:BoundedFar} 
		For all closed set $F$ and every
		compact set $K$ such that $F \cap K = \varnothing$, we have
		\[
		\sup_{(x,y) \in F \times K}|G(x,y)| < \infty.
		\]
	\end{enumerate}
	By \eqref{en:Confining} and since $W_\rho$ is bounded from
	below, we have
	\[
	x\in\mathbb{C}\mapsto\mathrm{e}^{- \beta \left(\alpha - n+1 \right)
		W_\rho(x)} \text{ is integrable }
	\Longleftrightarrow \beta(\alpha - n +
	1) > 2.
	\]
	Thus, using additionally \eqref{en:BoundedBelow}, we obtain
	\[
	\beta(\alpha - n + 1) > 2 \Longrightarrow Z_n < \infty.
	\]
	For the converse implication choose $n-1$ pairwise disjoint compact
	sets $K_1,\dots,K_{n-1}$ in a {neighborhood of the origin, say
		the open unit disc.}  Then, \eqref{en:BoundedFar} implies that
	$ -\beta \sum_{i<j} G(x_i,x_j) $ is bounded from below whenever
	$(x_1, \dots,x_{n-1}) \in K_1 \times \dots \times K_{n-1}$ and
	$|x_n| \geq 1$. As $W_\rho$ is bounded from above in the unit disc
	there exists a constant $C$ such that the integrand is bounded from
	below by
	\[
      \frac{C}{|x_n|^{\beta(\alpha - n + 1)}}
    \]
	whenever $(x_1,\dots,x_{n-1}) \in 
	K_1 \times \dots \times K_{n-1}$ and
	$|x_n| \geq 1$. We conclude that
	\[
      \beta(\alpha - n + 1) \leq 2\Longrightarrow Z_n = \infty.
    \]  
\end{proof}

\begin{remark}[Confinement or integrability condition]\label{rk:conf}
	The integrability condition in Lemma \ref{le:conf} can also be
	derived using the elementary inequality $|a-b|\leq(1+|a|)(1+|b|)$
	valid for all $a,b\in\mathbb{C}$, as in \cite{MR3215627}. Namely, it
	gives $\prod_{i<j}|x_i-x_j|\leq(\prod_{i=1}^n(1+|x_i|))^{n-1}$ since
	each $i$ appears exactly in $n-1$ elements of
	$\{(i,j):i<j\}$. Hence, for all $x_1,\ldots,x_n\in\mathbb{C}$ such
	that $|x_1|>R,\ldots,|x_n|>R$, we have, for some constants $c',c''$,
	\[
	% \begin{align*}
	E_n(x_1,\ldots,x_n)
	% &
	\geq c'-\sum_{i<j}\log|x_i-x_j|+\alpha\sum_{i=1}^n\log|x_i|%\\
	% &\geq c'-(n-1)\sum_{i=1}^n\log(1+|x_i|)+\alpha\sum_{i=1}^n\log|x_i|\\
	% &
	\geq c''-(\alpha-(n-1))\sum_{i=1}^n\log|x_i|.
	% \end{align*}
	\]
	Therefore $Z_n<\infty$ if $x\mapsto 1/|x|^{\beta(\alpha-(n-1))}$ is
	integrable at infinity with respect to the Lebesgue measure on
	$\mathbb{C}$, which holds {when}
	$\beta(\alpha-(n-1))>2$.
\end{remark}

\subsection{Proof of Theorem \ref{th:low}}
\label{se:th:low}

We use the method used in \cite{david-ihp} for the proof of \eqref{eq:LDP}.
This method takes its roots in \cite{dupuis2015large}. In this approach, the
relative entropy plays an essential role for tightness, even when it does not
appear in the final statement. Beware also that we have here to take into account
the fact that the potential $V=-\frac{\alpha}{n} U_\rho$ depends on $n$.

\begin{proof}[Proof of Theorem \ref{th:low}]
	Suppose first that $\lambda > 1$. Define
	$G: \mathbb C \times \mathbb C \to (-\infty,\infty]$ by
	\[
	G(x,y) = g(x-y) + W_\rho(x) + W_\rho(y)
	\]
	and
	\[
	A_n = \frac{\alpha_n - n + 1}{n} - \frac{4}{n \beta_n}.
	\]
	Then,
	\[
	\lim_{n \to \infty} A_n = \lambda - 1 > 0
	\]
	and $\mathrm{e}^{-\beta_nE_n(x_1,\ldots,x_n)}$ writes
	\begin{align*}
	\exp &\left[-\beta_n \left( 
	\sum_{i<j}g(x_i-x_j) + \alpha_n 
	\sum_{i=1}^n W_\rho (x_i) 
	\right) \right]				\\
	&=
	\exp \left[-\beta_n 
	\left(\sum_{i<j}G(x_i,x_j)
	+ A_n n \sum_{i=1}^n W_\rho(x_i) \right)
	\right]
	\prod_{i=1}^n
	\exp 
	\left[ - 4
	W_\rho(x_i) \right],
	\end{align*}
	where $W_\rho=-U_\rho$. Let us define
	$H_n: \mathbb C^n \to (-\infty,+\infty]$ by
	\[
	H_n(x_1,\dots,x_n)%
	=\frac{1}{n^2}\sum_{i<j}G(x_i,x_j)+A_n \frac{1}{n} \sum_{i=1}^n W_\rho(x_i) 
	\]
	and 
	\[
	\mathrm{d}\sigma(x)=\frac{\mathrm{e}^{-4W_\rho}}{Z_\sigma}\mathrm{d}\ell_{\mathbb{C}}(x)
	\quad\text{where}\quad
	Z_\sigma =\int_{\mathbb C}\mathrm{e}^{-4W_\rho(x)}\mathrm{d}\ell_{\mathbb{C}}(x),
	\]
	so that the Coulomb gas law
	$\mathrm{e}^{-\beta_nE_n}\mathrm{d}\ell_{\mathbb{C}}(x)$ is proportional to
	\[
	\mathrm{e}^{-n^2\beta_n H_n} \mathrm{d}\sigma^{\otimes_n}(x_1,\dots,x_n).
	\]
	Take any bounded continuous $f:\mathbb C \to \mathbb R$. Then following for
	instance \cite{david-ihp}, we get that
	\begin{multline*}
	\frac{1}{n^2\beta_n} 
	\log \int_{\mathbb C^n}\mathrm{e}^{-n^2\beta_n 
		\left(f \circ i_n + H_n \right)} 
	\mathrm{d}\sigma^{\otimes_n}(x_1,\dots,x_n)\\
	=
	-\inf_{\mu \in \mathcal{P}(\mathbb C^n )}
	\Bigr\{ 
	\mathbb E_{\mu} [f \circ i_n + H_n]
	+ \frac{1}{n^2\beta_n}D(\mu \mid \sigma^{\otimes_n} )
	\Bigr\},
	\end{multline*}
	where $\mathcal{P}(\mathbb{C}^n)$ is the set of probability measures on
	$\mathbb{C}^n$ and where
	\[
	i_n(x_1,\ldots,x_n)=\frac{1}{n}\sum_{i=1}^n\delta_{x_i},
	\]
	so that it is natural to expect that
	\[
	-\inf_{\mu \in \mathcal{P}(\mathbb C^n )} \Bigr\{ \mathbb E_{\mu} [f \circ
	i_n + H_n] + \frac{1}{n^2 \beta_n}D(\mu \mid \sigma )\Bigr\}
	\]
	converges to
	\[
	-\inf_{\mu \in \mathcal{P}(\mathbb C)}
	\Bigr\{ f(\mu) + 
	\frac{1}{2}
	\int_{\mathbb C \times \mathbb C}
	G(x,y) \mathrm{d}\mu(x) \mathrm{d}\mu(y)
	+ (\lambda - 1) 
	\int_{\mathbb C} W_\rho(x) \mathrm{d} \sigma(x)\Bigr\}
	\]
	which is what exactly happens. We refer to \cite{david-ihp} for the details.
	
	Suppose now that $\lambda = 1$. Define
	$H_n: \mathbb C^n \to (-\infty,\infty]$ by
	\[
	H_n(x_1,\dots,x_n)
	=
	\frac{1}{n^2}\sum_{i<j}G(x_i,x_j)
	\]
	where
	\[
	G(x,y) = g(x-y) + W_\rho(x) + W_\rho(y)
	\]
	and define
	\[
	\gamma_n= \beta_n(\alpha_n - n + 1).
	\]
	and
	\[
	\mathrm{d} \sigma_n(x)
	=\frac{\mathrm{e}^{-\gamma_n W_\rho}}{Z_{\sigma_n}}
	\mathrm{d}\ell_{\mathbb{C}}(x).
	\]
	The Coulomb gas law is proportional to
	\[
	\mathrm{e}^{-n^2\beta_n H_n}
	\mathrm{d} \sigma_n^{\otimes_n} (x_1,\dots,x_n).\]
	As before, we notice
	that
	\begin{multline*}
	\frac{1}{n^2\beta_n} 
	\log\int_{\mathbb C^n}\mathrm{e}^{-n^2\beta_n 
		\left(f \circ i_n + H_n \right)} 
	\mathrm{d}\sigma_n^{\otimes_n}(x_1,\dots,x_n)
	\\= 
	-\inf_{\mu \in \mathcal{P}(\mathbb C^n )}
	\Bigr\{ 
	\mathbb E_{\mu} [f \circ i_n + H_n]
	+ \frac{1}{n^2\beta_n}D(\mu \mid
	\sigma_n^{\otimes_n} )
	\Bigr\},
	\end{multline*}
	but now we remark that
	if $\mathrm{d} \mu = \rho \, \mathrm{d} \ell_{\mathbb C}$
	we have
	\[
	\frac{1}{n\beta_n}D(\mu \mid
	\sigma_n^{\otimes_n} )
	=
	\frac{1}{n \beta_n} \int_{\mathbb C} \rho \log \rho \, \mathrm{d} \ell_{\mathbb C}
	+ \frac{\gamma_n}{n \beta_n} \int_{\mathbb C}
	W_\rho \mathrm{d} \mu
	+ \frac{\gamma_n}{n \beta_n}
	\frac{1}{\gamma_n} \log \int_{\mathbb C}
	\mathrm{e}^{-\gamma_n W_\rho} \mathrm{d} \ell_{\mathbb C}
	\]
	if all these terms make sense (holds if $\rho$ is bounded and compactly
	supported) and, thus,
	\[
	\lim_{n \to \infty}
	\frac{1}{n\beta_n}D(\mu \mid
	\sigma_n^{\otimes_n} ) = 0.
	\]
	By the same arguments as before we may conclude.
\end{proof}

\subsection{Proof of Theorem \ref{th:high}}
\label{se:th:high}

As in the proof of Theorem \ref{th:low}, we proceed as in \cite{david-ihp} for the proof of (\ref{eq:LDP}),
taking into account the fact that the potential $V=-\alpha U_\rho$ depends on $n$.

\begin{proof}[Proof of Theorem \ref{th:high}]
	Choose any $\delta > 0$ such that $2 < \delta < \beta( \lambda - 1)$ and
	define
	\[
	A_n = \frac{\alpha_n - n + 1}{n} - \frac{\delta}{n\beta_n}.
	\]
	Then,
	\[
	\lim_{n\to\infty}A_n = \lambda - 1 - \frac{\delta}{\kappa} > 0
	\]
	and we write
	\begin{align*}
	\exp &\left[-\beta_n \left( 
	\sum_{i<j}g(x_i-x_j) + \alpha_n 
	\sum_{i=1}^n W_\rho (x_i) 
	\right) \right]				\\
	&=
	\exp \left[-\beta_n 
	\left(\sum_{i<j}G(x_i,x_j)
	+ A_n n \sum_{i=1}^n W_\rho(x_i) \right)
	\right]
	\prod_{i=1}^n
	\exp 
	\left[ - \delta
	W_\rho(x_i) \right]
	\end{align*}
	where $W_\rho=-U_\rho$ and $G(x,y)=g(x-y)+W_\rho(x)+W_\rho(y)$ are as in the
	proof of Theorem \ref{th:low}. Furthermore, by following the same ideas as
	for the case $\lambda > 1$ in the proof of Theorem \ref{th:low}, we conclude
	that $\mu_{X_n} \to \mu_*$ as $n\to\infty$, where $\mu_*$ is the unique
	minimizer of
	\[
	\mu \mapsto 
	\frac{\kappa}{2}\int_{\mathbb C \times \mathbb C} G(x,y) \mathrm{d}\mu(x)
	\mathrm{d}\mu(y) + \left[ \kappa(\lambda -1 ) - \delta \right]\int_{\mathbb
		C} W_\rho \mathrm{d} \mu + D\Bigr(\mu\mid\frac{\mathrm{e}^{-\delta W_\rho}}{Z_\sigma}\mathrm{d} \ell_{\mathbb C}\Bigr).
	\]
	We refer to \cite{david-ihp} for the details. Then, still following
	\cite{david-ihp}, we can get from \eqref{eq:mustar} that
	\[
	\Delta \log\varphi= 2\pi\kappa\varphi -
	2\pi\kappa\lambda\frac{\mathbf{1}_{\left|\cdot\right|\leq R}}{\pi R^2}.
	\]
\end{proof}

\subsection{Proof of Theorem \ref{th:edge}}
\label{se:th:edge}

\begin{proof}[Proof of Theorem \ref{th:edge}]
  It is enough to prove the fluctuation result since it implies the
	convergence in probability to the edge (just like the central limit
	theorem implies the weak law of large numbers). Now, since $\beta=2$,
  $P_n$ is determinantal and we can use \eqref{eq:kostlan} which we rephrase
  as follows: the point process of the radii $\{|X_{n,1}|,\dots,|X_{n,n}|\}$
  has the same law as the point process $\{|Y_{n,0}|,\dots,|Y_{n,n-1}|\}$
  where $Y_{n,0},\dots,Y_{n,n-1}$ are independent (not identically
  distributed) complex random variables such that
  \[
	Y_{n,k} \sim a_{n,k} |z|^{2k} \mathrm{e}^{-2(n+\kappa_n) W_\rho(z)} %
	\mathrm d \ell_{\mathbb C}(z) %
	\quad\text{with}\quad %
	a_{n,k} = \left( \int_{\mathbb C}
      |z|^{2k}\mathrm{e}^{-2(n+\kappa_n)W_\rho(z)} %
      \mathrm d \ell_{\mathbb C} (z) \right)^{-1}
  \]
  is a normalization constant, and $W_\rho=-U_\rho$. In particular,
  \begin{equation}\label{eq:KostlanMax}
	\mathbb{P}(\max_{1\leq k\leq n}|X_{n,k}|\leq x)
	= \prod_{k=0}^{n-1}\left(%
      1 - a_{n,k}\int_{D_x^c} |z|^{2k}\mathrm{e}^{-2(n+\kappa_n) W_\rho(z)} 
      \mathrm d \ell_{\mathbb C}(z)\right).
  \end{equation}
  By adding a constant, we shall suppose that $W_\rho(z) = \log R$ if
  $|z| = R$. In that case
  \begin{equation}\label{eq:Vlog}
	W_\rho(z) =\log|z|\quad\text{if}\quad |z| \geq R%
	\quad\text{while}\quad%
	W_\rho(z) > \log|z|\quad\text{if}\quad |z| < R.
  \end{equation}
  Suppose that $x > R$. Then, by \eqref{eq:KostlanMax} and \eqref{eq:Vlog},
  \[
	\mathbb P(\max_{1\leq k\leq n}|X_{n,k}|\leq x) 
	= \prod_{k=0}^{n-1}\Bigr(1 - a_{n,k}\int_{D_x^c} |z|^{2(k-n-\kappa_n) }
	\mathrm d \ell_{\mathbb C}(z)
	\Bigr).
  \]
  Using the change of indices $k \to n-k-1$
  we obtain
  \[
	\mathbb P(\max_{1\leq k\leq n}|X_{n,k}| \leq x) 
	= \prod_{k=0}^{n-1}
	\Bigr(1 - a_{n,n-k-1}\int_{D_x^c} |z|^{-2(k+\kappa_n+1) }
	\mathrm d \ell_{\mathbb C}(z)
	\Bigr).
  \]
  The limit can be calculated by using Lebesgue's dominated convergence
  theorem.
  
  \emph{Domination}. First we observe the domination
  \begin{equation}\label{eq:Domination}
	1 - b_{n,n-k-1}\int_{D_x^c} |z|^{-2(k+\kappa_n+1) }
	\mathrm d \ell_{\mathbb C}(z)
	\leq
	1 - a_{n,n-k-1}\int_{D_x^c} |z|^{-2(k+\kappa_n+1) }
	\mathrm d \ell_{\mathbb C}(z)
  \end{equation}
  where
  \[
	b_{n,n-k-1}= \Bigr(\int_{D_R^{c}} |z|^{-2(k+\kappa_n+1)}
	\mathrm{d}\ell_{\mathbb C} (z)\Bigr)^{-1}.
  \]
  But the left-hand side of \eqref{eq:Domination} can be calculated explicitly
  as
  \[
	1 - b_{n,n-k-1}\int_{D_x^c} |z|^{-2(k+\kappa_n+1) }
	\mathrm{d}\ell_{\mathbb C}(z) %
	=1 - \Bigr(\frac{R}{x} \Bigr)^{2(k + \kappa_n)}
  \]
  and, if we choose $\varepsilon \in (0, \kappa)$, then, for $n$ large enough
  we have $\kappa - \varepsilon <\kappa_n$ so that
  \[
	1 - \Bigr(\frac{R}{x} \Bigr)^{2(k + \kappa - \varepsilon)} \leq 1 -
	\Bigr(\frac{R}{x} \Bigr)^{2(k + \kappa_n)}.
  \]
  Since
  \[
	\prod_{k=0}^\infty\Bigr( 1 - \Bigr(\frac{R}{x} \Bigr)^{2(k + \kappa -
      \varepsilon)} \Bigr)>0
  \]
  we have a domination from below of our product.
  
  \emph{Pointwise convergence.} Now, let us see the convergence of the terms.
  The coefficient
  \begin{align*}
	a_{n,n-k-1}^{-1}
	& =\int_{\mathbb C} |z|^{2(n-k-1)}\mathrm{e}^{-2(n +\kappa_n)W_\rho(z)}
   \mathrm d \ell_{\mathbb C} (z)   				\\
	&=\int_{\mathbb C} |z|^{-2(k+1)}\mathrm{e}^{-2\kappa_n W_\rho(z)}
   \mathrm{e}^{-2n 
   \left( W_\rho(z) - \log|z| \right)}
   \mathrm d \ell_{\mathbb C} (z)   
  \end{align*}
  has an integrand which converges to
  \[
	|z|^{-2(k+1)}\mathrm{e}^{-2\kappa W_\rho(z)}\mathbf{1}_{|z|\geq R}
	= |z|^{-2(k+\kappa + 1)}\mathbf{1}_{|z|\geq R}
  \]
  and that is dominated by, for instance,
  $|z|^{-2(k+1)}\mathrm{e}^{-(\kappa+\varepsilon)W_\rho(0)}
  \mathrm{e}^{-2(\kappa - \varepsilon)W_\rho(z)}$ which is integrable. So,
  \[
	\lim_{n \to \infty}a_{n,n-k-1}^{-1}
	= \int_{D_R^c}|z|^{-2(k+\kappa + 1)}
	\mathrm d \ell_{\mathbb C}(z)
  \]
  and, by evaluating the integrals,
  \[
	\lim_{n \to \infty}
	\Bigr(1 - a_{n,n-k-1}^{-1}\int_{D_x^c} |z|^{-2(k+\kappa_n+1) }
	\mathrm d \ell_{\mathbb C}(z) \Bigr)
	= 1 - \Bigr(\frac{R}{x} \Bigr)^{2(k+\kappa)}.
  \]
  
  \emph{Lebesgue's dominated convergence theorem.} Having dominated each term
  from below and having proved the convergence of each term we apply
  Lebesgue's dominated convergence theorem and obtain the desired result.
\end{proof}

\subsection{Proof of Theorem \ref{th:gumbel}}
\label{se:th:gumbel}

\begin{proof}[Proof of Theorem \ref{th:gumbel}]
  It is enough to prove the fluctuation result since it implies the
  convergence in probability to the edge. Now, let us write
  $\alpha_n=n\lambda_n$ with $\lambda_n\to\lambda>1$. From the formula for
  $V=-\frac{\alpha_n}{n}U_\rho$ of Lemma \ref{le:conf}, we get that
  $V=V^{\mathrm{Gin}}$ on $D_R$, where
  \[
	V^{\mathrm{Gin}}=\frac{\lambda_n}{2R^2}\left|\cdot\right|^2.
  \]
  Let $P_n^{\mathrm{Gin}}$ be the Boltzmann--Gibbs probability measure on
  $\mathbb{C}^n$ defined by \eqref{eq:P} with potential $V^{\mathrm{Gin}}$,
  which is a (scaled) complex Ginibre ensemble. It follows that for any event
  \[
	A\subset D_R^n%
	=D_R\times\cdots\times D_R%
	=\{x\in\mathbb{C}^n:\max(|x_1|,\ldots,|x_n|)\leq R\}
  \]
  we have
  \begin{equation}\label{eq:gequiv}
	P_n(A)=\frac{Z_n^{\mathrm{Gin}}}{Z_n}P_n^{\mathrm{Gin}}(A).
  \end{equation}
  
  From Theorem \ref{th:high}, the limiting distribution $\mu_*$ under $P_n$ is
  supported in $D_{R/\sqrt{\lambda}}$. Now $\lambda>1$ implies
  $D_{R/\sqrt{\lambda}}\subsetneq D_R$. For an arbitrary
  $\varepsilon\in(0,R(1-1/\sqrt{\lambda})]$, let us define the event
  \[
	A_n=D_{R/\sqrt{\lambda}+\varepsilon}^n
	=\Bigr\{x\in\mathbb{C}^n:
	\max(|x_1|,\ldots,|x_n|)\leq \frac{R}{\sqrt{\lambda}}+\varepsilon\Bigr\}
	\subset D_R^n.
  \]
  
  Let $a_n,b_n$ and $G$ be as in Theorem \ref{th:gumbel} and let $\xi\sim G$.
  Let us define
  \[
	M_n=a_n(\max(|x_1|,\ldots,|x_n|)-b_n).
  \]
  Then it is known, see \cite{MR1986426,MR3215627}, that
  \begin{equation}\label{eq:gedge}
	\lim_{n\to\infty}P_n^{\mathrm{Gin}}(A_n^c)=0
	\quad\text{and}\quad
	\lim_{n\to\infty}\mathbb{E}_{P_n^{\mathrm{Gin}}}(\mathrm{e}^{\mathrm{i}\theta M_n})
	=\mathbb{E}(\mathrm{e}^{\mathrm{i}\theta\xi}),
	\quad\theta\in\mathbb{R}.
  \end{equation}
  
  \begin{lemma}[Partition functions]\label{le:ZZ}
    \begin{equation}\label{eq:ZZ}
      \lim_{n\to\infty}\frac{Z_n^{\mathrm{Gin}}}{Z_n}=1.
    \end{equation}
  \end{lemma}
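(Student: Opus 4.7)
The plan is to reduce the ratio of partition functions to a ratio of probabilities via \eqref{eq:gequiv}, then control each probability separately. Taking $A = D_R^n$ in \eqref{eq:gequiv} yields
\[
\frac{Z_n^{\mathrm{Gin}}}{Z_n}=\frac{P_n(D_R^n)}{P_n^{\mathrm{Gin}}(D_R^n)},
\]
so it suffices to prove that both $P_n(D_R^n) \to 1$ and $P_n^{\mathrm{Gin}}(D_R^n) \to 1$.

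The Ginibre side is immediate: $P_n^{\mathrm{Gin}}$ is a complex Ginibre ensemble rescaled by the factor $C_n = R/\sqrt{\lambda_n}$, and its equilibrium measure is uniform on $D_{R/\sqrt{\lambda_n}}$. The classical edge concentration \eqref{eq:gedge} (or equivalently \eqref{eq:ginibre-edge} after unscaling) says that the farthest particle lies within $D_{R/\sqrt{\lambda}+\varepsilon}$ with probability tending to $1$, and picking $\varepsilon \in (0, R(1-1/\sqrt{\lambda}))$ gives $D_{R/\sqrt{\lambda}+\varepsilon} \subset D_R$, so $P_n^{\mathrm{Gin}}(D_R^n) \to 1$.

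The jellium side is the main obstacle. I would exploit the Kostlan representation \eqref{eq:kostlan} available since $\beta = 2$: the moduli $|X_{n,1}|,\ldots,|X_{n,n}|$ coincide in law, as a multiset, with an independent family $\{R_{n,k}\}_{k=0}^{n-1}$ such that $R_{n,k}$ has density proportional to $r^{2k+1}\mathrm{e}^{-2\alpha_n W_\rho(r)}$, so that $P_n(D_R^n) = \prod_{k=0}^{n-1}\mathbb P(R_{n,k}\leq R)$. Plugging in the explicit piecewise form of $W_\rho$ from Lemma \ref{le:conf} (quadratic inside $D_R$, logarithmic outside) and computing, the tail reduces to
\[
\mathbb P(R_{n,k} > R) = \frac{1}{1+(\alpha_n-k-1)\,\mathrm{e}^{\alpha_n}\,\gamma(k+1,\alpha_n)/\alpha_n^{k+1}},
\]
where $\gamma$ is the lower incomplete gamma function. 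Via $1 - \prod_k(1-p_k) \leq \sum_k p_k$ it is enough to show $\sum_{k=0}^{n-1}\mathbb P(R_{n,k} > R) \to 0$. Setting $c_k = k/\alpha_n$ and combining Stirling with the Poisson-tail lower bound $\gamma(k+1,\alpha_n) \geq k!/2$ (valid for $n$ large since $(n-1)/\alpha_n \to 1/\lambda < 1$), the denominator above grows like $(\alpha_n - k)\,\mathrm{e}^{\alpha_n f(c_k)}$ up to polynomial factors, where $f(c) = 1 - c + c\log c$ is the Poisson Cramér rate function, strictly positive on $[0,1)$ and vanishing only at $c = 1$.

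The key analytical point — and the main difficulty — is to rule out $c_k$ getting close to $1$, since $f$ vanishes there. This is where the hypothesis $\lambda > 1$ is essential: it yields $c_k \leq (n-1)/\alpha_n \to 1/\lambda < 1$ uniformly in $k \in \{0,\ldots,n-1\}$, whence $f(c_k) \geq f(1/\lambda) - o(1) > 0$ uniformly. This produces an exponentially small bound on each $\mathbb P(R_{n,k} > R)$, strong enough to survive summation over the $n$ indices, so $\sum_{k=0}^{n-1}\mathbb P(R_{n,k} > R) \to 0$ and hence $P_n(D_R^n) \to 1$. Combined with the Ginibre estimate, this gives $Z_n^{\mathrm{Gin}}/Z_n \to 1$.
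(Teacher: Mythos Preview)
Your proposal is correct and follows essentially the same route as the paper: reduce via \eqref{eq:gequiv} to showing $P_n^{\mathrm{Gin}}(D_R^n)\to1$ and $P_n(D_R^n)\to1$, handle the Ginibre side by the known edge result \eqref{eq:gedge}, and attack the jellium side through the Kostlan product \eqref{eq:kostlan}. The only difference is that the paper, after writing down the Kostlan product, remarks that ``it is possible to follow this elementary route'' but then invokes the localization estimate of \cite{ameur2019localization} (a sharpening of \cite{chafai2018concentration}) to conclude \eqref{eq:jedge}, whereas you actually carry out the elementary route via the incomplete gamma function and the Poisson--Cram\'er rate $f(c)=1-c+c\log c$; your observation that $c_k\le(n-1)/\alpha_n\to1/\lambda<1$ keeps $f(c_k)$ uniformly positive is exactly the place where the hypothesis $\lambda>1$ enters, matching the paper's remark that condition (vi) of \cite{ameur2019localization} holds precisely when $\lambda>1$.
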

  
  \begin{proof}[Proof of Lemma \eqref{le:ZZ}]
    Since $Z_nP_n(A_n)=Z_n^{\mathrm{Gin}}P_n^{\mathrm{Gin}}(A_n)$ and since
    $\lim_{n\to\infty}P_n^{\mathrm{Gin}}(A_n)=1$ from \eqref{eq:gedge}, the
    desired statement is actually equivalent to
    \begin{equation}\label{eq:jedge}
      \lim_{n\to\infty}P_n(A_n)=1.
    \end{equation}
    But from \eqref{eq:kostlan}, we obtain, denoting $V=Q(\left|\cdot\right|)$
    and $r=R/\sqrt{\lambda}+\varepsilon$,
    \[
      P_n(A_n)=\prod_{k=1}^n\Bigr(1-c_{n,k}\int_r^\infty
      t^{2k-1}\mathrm{e}^{-2nQ(t)}\mathrm{d}t\Bigr)
      \quad\text{where}\quad
      c_{n,k}^{-1}=\int_0^\infty t^{2k-1}\mathrm{e}^{-2nQ(t)}\mathrm{d}t.
    \]
    It is possible to follow this elementary route and to evaluate the limit
    of this product by evaluating the integrals. Actually \eqref{eq:jedge} is
    a weak consequence of \cite{ameur2019localization}, which is itself a
    refinement of \cite[Theorem 1.12]{chafai2018concentration}. Note that
    \cite[Theorem 1]{ameur2019localization} deals with potentials not
    necessarily rotationally invariant, and provides a quantitative exponential
    upper bound on the probability. Note also that condition (vi) of
    \cite{ameur2019localization} translates in our context to
    $\int_\mathbb{C} U_\rho\mathrm{e}^{2\lambda
      U_\rho}\mathrm{d}\ell_{\mathbb{C}}<\infty$ which holds precisely when
    $\lambda>1$. When $\lambda=1$, Theorem \ref{th:edge} shows that
    \eqref{eq:jedge} does not hold, so that $\lambda>1$ is the optimal
    condition on $\lambda$ under which \eqref{eq:jedge} can hold.
  \end{proof}
  
  Now, using \eqref{eq:gequiv}, \eqref{eq:ZZ}, and \eqref{eq:gedge}, we
  obtain
  \[
	|\mathbb{E}_{P_n}(\mathrm{e}^{\mathrm{i}\theta M_n})
	-\mathbb{E}_{P_n}(\mathrm{e}^{\mathrm{i}\theta M_n}\mathbf{1}_{A_n})|
	=|\mathbb{E}_{P_n}(\mathrm{e}^{\mathrm{i}\theta M_n}\mathbf{1}_{A_n^c})|
	\leq P_n(A_n^c)
	=\frac{Z_n^{\mathrm{Gin}}}{Z_n}P_n^{\mathrm{Gin}}(A_n^c)
	\underset{n\to\infty}{\longrightarrow}0.
  \]
  Next, and similarly, using \eqref{eq:gequiv},
  \[
	\mathbb{E}_{P_n}(\mathrm{e}^{\mathrm{i}\theta M_n}\mathbf{1}_{A_n})
	=
	\frac{Z_n^{\mathrm{Gin}}}{Z_n}
	\mathbb{E}_{P_n^{\mathrm{Gin}}}(\mathrm{e}^{\mathrm{i}\theta M_n}\mathbf{1}_{A_n})
	=
	\frac{Z_n^{\mathrm{Gin}}}{Z_n}
	\mathbb{E}_{P_n^{\mathrm{Gin}}}(\mathrm{e}^{\mathrm{i}\theta M_n})
	-
	\frac{Z_n^{\mathrm{Gin}}}{Z_n}\mathbb{E}_{P_n^{\mathrm{Gin}}}(\mathrm{e}^{\mathrm{i}\theta M_n}\mathbf{1}_{A_n^c}),
  \]
  and using \eqref{eq:ZZ} and \eqref{eq:gedge} we obtain
  \[
	\lim_{n\to\infty}
	|\mathbb{E}_{P_n}(\mathrm{e}^{\mathrm{i}\theta M_n}\mathbf{1}_{A_n})    
	-\mathbb{E}(\mathrm{e}^{\mathrm{i}\theta\xi})|
	=0.    
  \]
  Finally we have obtained as expected that 
  \[
	\lim_{n\to\infty}\mathbb{E}_{P_n}(\mathrm{e}^{\mathrm{i}\theta M_n})
	=\lim_{n\to\infty}\mathbb{E}_{P_n^{\mathrm{Gin}}}(\mathrm{e}^{\mathrm{i}\theta M_n})
	=\mathbb{E}(\mathrm{e}^{\mathrm{i}\theta\xi}),\quad\theta\in\mathbb{R}.
  \]
\end{proof}

\begin{remark}
	An alternative and purely equivalent way to formulate the proof of
	Theorem \ref{th:gumbel} is to note first that \eqref{eq:gequiv} is
	indeed equivalent to stating that
	\begin{equation}\label{eq:gequivcond}
	P_n(\cdot\mid D_R^n)=P_n^{\mathrm{Gin}}(\cdot\mid D_R^n).
	\end{equation}
	Next, we may deduce from \eqref{eq:gequiv}, \eqref{eq:ZZ}, and
	\eqref{eq:gedge} that
	\[
	\lim_{n\to\infty}P_n(D_R^n)=\lim_{n\to\infty}P_n^{\mathrm{Gin}}(D_R^n)=1.
	\]
	which implies using \eqref{eq:gequivcond} and Lebesgue's dominated
	convergence theorem that
	\[
	\lim_{n\to\infty}
	\Bigr(P_n(\cdot\mid D_R^n)-P_n^{\mathrm{Gin}}(\cdot\mid D_R^n)\Bigr)=0
	\]
	weakly in $\mathcal{P}(\mathbb{C})$, which implies in turn using again
	\eqref{eq:gedge} that
	\[
	\lim_{n\to\infty}\mathbb{E}_{P_n}(\mathrm{e}^{\mathrm{i}\theta M_n})
	=\lim_{n\to\infty}\mathbb{E}_{P_n^{\mathrm{Gin}}}(\mathrm{e}^{\mathrm{i}\theta
		M_n})
	=\mathbb{E}(\mathrm{e}^{\mathrm{i}\theta\xi}),\quad\theta\in\mathbb{R}.
	\]
\end{remark}

\section{Historical comments}\label{se:hist}

% Let us give some useful historical comments.
The \emph{jellium} model was used around 1938 by Eugene P.\ Wigner in
\cite{TF9383400678} for the modeling of electrons in metals, more than ten
years before his renowned works on random matrices. This model was inspired
from the Hartree--Fock model of quantum mechanics, see
\cite{guiuliani,Lieb1975,MR2583992,MR3309890}, and
\cite{lewin-lieb-seiringer,MR3732693}. The term \emph{jellium} was apparently
coined by Conyers Herring since the smeared charge could be viewed as a
positive ``jelly'', see \cite{hughes2006theoretical}. The model is also known
as a \emph{one-component plasma with background}. As already mentioned,
usually charge-neutral jellium models are studied, and this is done typically
after restricting the electrons to live on some compact support of positive
background. The restriction ensures integrability of the energy and the
interest is usually focused on the distribution/behavior of electrons in the
``bulk'' of the limiting system when the volume of the compact set goes to
infinity (thermodynamic limit). There are some exceptions where the edge has
been considered, for instance in \cite{can2015exact}. Also, the edge of
Laughlin states has been considered in \cite{can2014singular,
  gromov2016boundary}.

The case $d=3$ is considered in \cite{Lieb1975}, and
quoting \cite{Lieb1975}: ``\emph{It is also possible to consider the one- and
	two-dimensional versions of this problem, where the Coulomb potential
	$|x|^{-1}$ is replaced by $-|x|$ and $-\log|x|$, respectively. In the
	one-dimensional, classical case, Baxter \cite{baxter_1963} calculated the
	partition function exactly. For that case, Kunz \cite{KUNZ1974303} showed
	that the one-particle distribution function exists and that it has
	crystalline ordering, i.e., the Wigner lattice exists for all temperatures.
	Brascamp and Lieb \cite{Brascamp2002} showed the same to be true in the
	quantum mechanical case for one-component fermions when $\beta$ is large
	enough. Although we do not deal with the one-dimensional problem here, our
	methods would apply in that case. In two dimensions there are difficulties
	connected with the long-range nature of the $-\log|x|$ potential, and we
	shall not discuss this here.}'' For more background literature on the
jellium, see also \cite{refId0,
	Jancovici1993,FORRESTER1998235,aizenman2010symmetry,jansen2014wigner,MR3664397}.
% , and for the one-dimensional jellium see also \cite{aizenman1980structure,
% jansen2014wigner, PhysRevLett.119.060601, Dhar2018}. See
% \cite{alastuey1984two,jancovici1984surface} for previous work where
% non-constant backgrounds were employed in the model. Also, see
See in particular \cite{levesque2000charge}, for the fluctuations of
non-neutral jelliums.

Historically, Coulomb gas models appeared naturally in statistics around
1920-1930 in the study of the spectrum of empirical covariance matrices of
Gaussian samples. Nowadays we speak about the Laguerre ensemble and Wishart
random matrices. This was almost ten years before the introduction of the
jellium model by Wigner. In the 1950's, Wigner rediscovered, by accident, these
works by reading a statistics textbook, and this motivated him to use random
matrices for the modeling of energy levels of heavy nuclei in atomic physics,
see \cite{blog}. We refer to \cite{MR2932622} for these historical aspects.
The work of Wigner was amazingly successful, and he received in 1963 a Nobel
prize in Physics ``\emph{for his contributions to the theory of the atomic
	nucleus and the elementary particles, particularly through the discovery and
	application of fundamental symmetry principles.}''. The term \emph{Coulomb
	gas} is explicitly used by Dyson in his first seminal 1962 paper
\cite{dyson1962statistical} and by Ginibre \cite{MR0173726}, while the term
\emph{Fermi-gas} was used earlier by Mehta \&\ Gaudin \cite{mehta1960density}
and also later by Dyson \&\ Mehta \cite{mehta1963statistical}.

% While the study of both the macroscopic and edge behavior is natural and
% standard in the context of Coulomb gases, random matrix theory, and
% statistical physics, the study of the edge may sound exotic in the context
% of the jellium on the analytic side of mathematical physics, in which the
% analysis is restricted in general to microscopic in the bulk.

\section*{Acknowledgments}

We would like to thank an anonymous reviewer for his useful remarks on the
presentation, and also Sabine Jansen, Mathieu Lewin, and Grégory Schehr for
their feedback.

\bibliographystyle{smfplain}
\bibliography{jellium2d}

\providecommand{\bysame}{\leavevmode ---\ }
\providecommand{\og}{``}
\providecommand{\fg}{''}
\providecommand{\smfandname}{\&}
\providecommand{\smfedsname}{\'eds.}
\providecommand{\smfedname}{\'ed.}
\providecommand{\smfmastersthesisname}{M\'emoire}
\providecommand{\smfphdthesisname}{Th\`ese}
\begin{thebibliography}{10}

\bibitem{aizenman2010symmetry}
{\scshape M.~Aizenman, S.~Jansen {\normalfont \smfandname} P.~Jung} -- {\og
  {Symmetry breaking in quasi-1D Coulomb systems}\fg}, \emph{Annales Henri
  Poincar{\'e}} \textbf{11} (2010), no.~8, p.~1453--1485.

\bibitem{akemann-byun}
{\scshape G.~Akemann {\normalfont \smfandname} S.-S. Byun} -- {\og The high
  temperature crossover for general 2{D} {C}oulomb gases\fg}, preprint
  \href{https://arxiv.org/abs/1808.00319v1}{arXiv:1808.00319v1}, 2018.

\bibitem{refId0}
{\scshape {Alastuey, A.} {\normalfont \smfandname} {Jancovici, B.}} -- {\og {On
  the classical two-dimensional one-component Coulomb plasma}\fg}, \emph{J.
  Phys. France} \textbf{42} (1981), no.~1, p.~1--12.

\bibitem{ameur2019localization}
{\scshape Y.~Ameur} -- {\og {A localization theorem for the planar Coulomb gas
  in an external field}\fg}, arXiv preprint
  \href{http://arxiv.org/abs/1907.00923v1}{arXiv:1907.00923v1}, 2019.

\bibitem{MR3342661}
{\scshape Y.~Ameur, H.~Hedenmalm {\normalfont \smfandname} N.~Makarov} -- {\og
  Random normal matrices and {W}ard identities\fg}, \emph{Ann. Probab.}
  \textbf{43} (2015), no.~3, p.~1157--1201.

\bibitem{MR2760897}
{\scshape G.~W. Anderson, A.~Guionnet {\normalfont \smfandname} O.~Zeitouni} --
  \emph{An introduction to random matrices}, Cambridge Studies in Advanced
  Mathematics, vol. 118, Cambridge University Press, Cambridge, 2010.

\bibitem{baxter_1963}
{\scshape R.~J. Baxter} -- {\og Statistical mechanics of a one-dimensional
  {C}oulomb system with a uniform charge background\fg}, \emph{Mathematical
  Proceedings of the Cambridge Philosophical Society} \textbf{59} (1963),
  no.~4, p.~779–787.

\bibitem{MR2909692}
{\scshape R.~J. Berman} -- {\og K\"{a}hler--{E}instein metrics emerging from
  free fermions and statistical mechanics\fg}, \emph{J. High Energy Phys.}
  (2011), no.~10, p.~106, 31.

\bibitem{MR1678526}
{\scshape T.~Bodineau {\normalfont \smfandname} A.~Guionnet} -- {\og About the
  stationary states of vortex systems\fg}, \emph{Ann. Inst. H. Poincar\'{e}
  Probab. Statist.} \textbf{35} (1999), no.~2, p.~205--237.

\bibitem{MR2932622}
{\scshape O.~Bohigas {\normalfont \smfandname} H.~A. Weidenm\"{u}ller} -- {\og
  History---an overview\fg}, in \emph{The {O}xford handbook of random matrix
  theory}, Oxford Univ. Press, Oxford, 2011, p.~15--39.

\bibitem{MR2908617}
{\scshape C.~Bordenave {\normalfont \smfandname} D.~Chafa\"{\i}} -- {\og Around
  the circular law\fg}, \emph{Probab. Surv.} \textbf{9} (2012), p.~1--89.

\bibitem{Brascamp2002}
{\scshape H.~J. Brascamp {\normalfont \smfandname} E.~H. Lieb} -- {\og Some
  inequalities for {G}aussian measures and the long-range order of the
  one-dimensional plasma\fg}, in \emph{Inequalities: Selecta of Elliott H.
  Lieb} (M.~Loss {\normalfont \smfandname} M.~B. Ruskai, \smfedsname), Springer
  Berlin Heidelberg, Berlin, Heidelberg, 2002, p.~403--416.

\bibitem{raphael-david}
{\scshape R.~Butez {\normalfont \smfandname} D.~Garcia-Zelada} -- {\og Extremal
  particles of two-dimensional {C}oulomb gases and random polynomials on a
  positive background\fg}, preprint
  \href{http://arxiv.org/abs/1811.12225v1}{arXiv:1811.12225v1}, 2018.

\bibitem{MR1145596}
{\scshape E.~Caglioti, P.-L. Lions, C.~Marchioro {\normalfont \smfandname}
  M.~Pulvirenti} -- {\og A special class of stationary flows for
  two-dimensional {E}uler equations: a statistical mechanics description\fg},
  \emph{Comm. Math. Phys.} \textbf{143} (1992), no.~3, p.~501--525.

\bibitem{can2014singular}
{\scshape T.~Can, P.~Forrester, G.~T{\'e}llez {\normalfont \smfandname}
  P.~Wiegmann} -- {\og Singular behavior at the edge of {L}aughlin states\fg},
  \emph{Physical Review B} \textbf{89} (2014), no.~23, p.~235137.

\bibitem{can2015exact}
\bysame , {\og Exact and asymptotic features of the edge density profile for
  the one component plasma in two dimensions\fg}, \emph{Journal of Statistical
  Physics} \textbf{158} (2015), no.~5, p.~1147--1180.

\bibitem{blog}
{\scshape D.~Chafaï} -- {\og {W}igner about level spacing and {W}ishart\fg},
  blogpost
  \url{http://djalil.chafai.net/blog/2014/09/26/wigner-about-level-spacing-and-wishart/},
  2014.

\bibitem{MR3911782}
{\scshape D.~Chafa\"{\i} {\normalfont \smfandname} G.~Ferr\'{e}} -- {\og
  Simulating {C}oulomb and log-gases with hybrid {M}onte {C}arlo
  algorithms\fg}, \emph{J. Stat. Phys.} \textbf{174} (2019), no.~3,
  p.~692--714.

\bibitem{MR3262506}
{\scshape D.~Chafa\"{\i}, N.~Gozlan {\normalfont \smfandname} P.-A. Zitt} --
  {\og First-order global asymptotics for confined particles with singular pair
  repulsion\fg}, \emph{Ann. Appl. Probab.} \textbf{24} (2014), no.~6,
  p.~2371--2413.

\bibitem{chafai2018concentration}
{\scshape D.~Chafai, A.~Hardy {\normalfont \smfandname} M.~Ma{\"\i}da} -- {\og
  Concentration for coulomb gases and coulomb transport inequalities\fg},
  \emph{Journal of Functional Analysis} \textbf{275} (2018), no.~6,
  p.~1447--1483.

\bibitem{MR3215627}
{\scshape D.~Chafa\"{\i} {\normalfont \smfandname} S.~P\'{e}ch\'{e}} -- {\og A
  note on the second order universality at the edge of {C}oulomb gases on the
  plane\fg}, \emph{J. Stat. Phys.} \textbf{156} (2014), no.~2, p.~368--383.

\bibitem{MR3664397}
{\scshape F.~D. Cunden, P.~Facchi, M.~Ligab\`o {\normalfont \smfandname}
  P.~Vivo} -- {\og Universality of the third-order phase transition in the
  constrained {C}oulomb gas\fg}, \emph{J. Stat. Mech. Theory Exp.} (2017),
  no.~5, p.~053303, 18.

\bibitem{PhysRevLett.119.060601}
{\scshape A.~Dhar, A.~Kundu, S.~N. Majumdar, S.~Sabhapandit {\normalfont
  \smfandname} G.~Schehr} -- {\og {E}xact {E}xtremal {S}tatistics in the
  {C}lassical 1{D} {C}oulomb {G}as\fg}, \emph{Phys. Rev. Lett.} \textbf{119}
  (2017), p.~060601.

\bibitem{Dhar2018}
{\scshape A.~Dhar, A.~Kundu, S.~N. Majumdar, S.~Sabhapandit {\normalfont
  \smfandname} G.~Schehr} -- {\og Extreme statistics and index distribution in
  the classical 1d coulomb gas\fg}, \emph{Journal of Physics A: Mathematical
  and Theoretical} \textbf{51} (2018), no.~29, p.~295001.

\bibitem{MR3817495}
{\scshape A.~Dhar, A.~Kundu, S.~N. Majumdar, S.~Sabhapandit {\normalfont
  \smfandname} G.~Schehr} -- {\og Extreme statistics and index distribution in
  the classical {$1d$} {C}oulomb gas\fg}, \emph{J. Phys. A} \textbf{51} (2018),
  no.~29, p.~295001, 32.

\bibitem{dupuis2015large}
{\scshape P.~Dupuis, V.~Laschos {\normalfont \smfandname} K.~Ramanan} -- {\og
  Large deviations for configurations generated by {G}ibbs distributions with
  energy functionals consisting of singular interaction and weakly confining
  potentials\fg}, preprint
  \href{http://arxiv.org/abs/1511.06928v4}{arXiv:1511.06928v4}, 2020.

\bibitem{dyson1962statistical}
{\scshape F.~J. Dyson} -- {\og Statistical theory of the energy levels of
  complex systems. i\fg}, \emph{Journal of Mathematical Physics} \textbf{3}
  (1962), no.~1, p.~140--156.

\bibitem{FORRESTER1998235}
{\scshape P.~J. Forrester} -- {\og Exact results for two-dimensional {C}oulomb
  systems\fg}, \emph{Physics Reports} \textbf{301} (1998), no.~1, p.~235 --
  270.

\bibitem{MR2641363}
\bysame , \emph{Log-gases and random matrices}, London Mathematical Society
  Monographs Series, vol.~34, Princeton University Press, Princeton, NJ, 2010.

\bibitem{MR2540391}
{\scshape P.~J. Forrester {\normalfont \smfandname} M.~Krishnapur} -- {\og
  Derivation of an eigenvalue probability density function relating to the
  {P}oincar\'{e} disk\fg}, \emph{J. Phys. A} \textbf{42} (2009), no.~38,
  p.~385204, 10.

\bibitem{david-edge}
{\scshape D.~García-Zelada} -- {\og Edge fluctuations for a class of
  two-dimensional determinantal {C}oulomb gases\fg}, preprint
  \href{http://arxiv.org/abs/1812.11170v1}{arXiv:1812.11170v1}, 2018.

\bibitem{david-ihp}
\bysame , {\og A large deviation principle for empirical measures on {P}olish
  spaces: Application to singular {G}ibbs measures on manifolds\fg}, preprint
  \href{https://arxiv.org/abs/1703.02680v3}{arXiv:1703.02680v3} to appear in
  {Annales de l'Institut Henri Poincaré, Probabilités et statistique}, 2019.

\bibitem{MR0173726}
{\scshape J.~Ginibre} -- {\og Statistical ensembles of complex, quaternion, and
  real matrices\fg}, \emph{J. Mathematical Phys.} \textbf{6} (1965),
  p.~440--449.

\bibitem{gromov2016boundary}
{\scshape A.~Gromov, K.~Jensen {\normalfont \smfandname} A.~G. Abanov} -- {\og
  Boundary effective action for quantum {H}all states\fg}, \emph{Physical
  review letters} \textbf{116} (2016), no.~12, p.~126802.

\bibitem{guiuliani}
{\scshape G.~F. Guiuliani {\normalfont \smfandname} G.~Vignale} --
  \emph{Quantum {T}heory of the {E}lectron {L}iquid}, Cambridge University
  Press, 2008.

\bibitem{MR2926763}
{\scshape A.~Hardy} -- {\og A note on large deviations for 2{D} {C}oulomb gas
  with weakly confining potential\fg}, \emph{Electron. Commun. Probab.}
  \textbf{17} (2012), p.~no. 19, 12.

\bibitem{hedenmalm-wennman}
{\scshape H.~Hedenmalm {\normalfont \smfandname} A.~Wennman} -- {\og
  Off-spectral analysis of {B}ergman kernels\fg}, preprint, 2018.

\bibitem{MR3308615}
{\scshape L.~L. Helms} -- \emph{Potential theory}, second \smfedname,
  Universitext, Springer, London, 2014.

\bibitem{MR1746976}
{\scshape F.~Hiai {\normalfont \smfandname} D.~Petz} -- \emph{The semicircle
  law, free random variables and entropy}, Mathematical Surveys and Monographs,
  vol.~77, American Mathematical Society, Providence, RI, 2000.

\bibitem{MR2552864}
{\scshape J.~B. Hough, M.~Krishnapur, Y.~Peres {\normalfont \smfandname}
  B.~Vir\'{a}g} -- \emph{Zeros of {G}aussian analytic functions and
  determinantal point processes}, University Lecture Series, vol.~51, American
  Mathematical Society, Providence, RI, 2009.

\bibitem{hughes2006theoretical}
{\scshape R.~Hughes} -- {\og {Theoretical practice: the Bohm-Pines
  quartet}\fg}, \emph{Perspectives on science} \textbf{14} (2006), no.~4,
  p.~457--524.

\bibitem{Jancovici1993}
{\scshape B.~Jancovici, J.~L. Lebowitz {\normalfont \smfandname} G.~Manificat}
  -- {\og Large charge fluctuations in classical {C}oulomb systems\fg},
  \emph{Journal of Statistical Physics} \textbf{72} (1993), no.~3, p.~773--787.

\bibitem{jansen2014wigner}
{\scshape S.~Jansen {\normalfont \smfandname} P.~Jung} -- {\og Wigner
  crystallization in the quantum 1d jellium at all densities\fg},
  \emph{Communications in Mathematical Physics} \textbf{331} (2014), no.~3,
  p.~1133--1154.

\bibitem{MR3615091}
{\scshape T.~Jiang {\normalfont \smfandname} Y.~Qi} -- {\og Spectral radii of
  large non-{H}ermitian random matrices\fg}, \emph{J. Theoret. Probab.}
  \textbf{30} (2017), no.~1, p.~326--364.

\bibitem{MR1148410}
{\scshape E.~Kostlan} -- {\og On the spectra of {G}aussian matrices\fg},
  \emph{Linear Algebra Appl.} \textbf{162/164} (1992), p.~385--388, Directions
  in matrix theory (Auburn, AL, 1990).

\bibitem{MR2489167}
{\scshape M.~Krishnapur} -- {\og From random matrices to random analytic
  functions\fg}, \emph{Ann. Probab.} \textbf{37} (2009), no.~1, p.~314--346.

\bibitem{KUNZ1974303}
{\scshape H.~Kunz} -- {\og The one-dimensional classical electron gas\fg},
  \emph{Annals of Physics} \textbf{85} (1974), no.~2, p.~303 -- 335.

\bibitem{MR3761607}
{\scshape B.~Lacroix-A-Chez-Toine, A.~Grabsch, S.~N. Majumdar {\normalfont
  \smfandname} G.~Schehr} -- {\og Extremes of 2d {C}oulomb gas: universal
  intermediate deviation regime\fg}, \emph{J. Stat. Mech. Theory Exp.} (2018),
  no.~1, p.~013203, 39.

\bibitem{PhysRevA.99.021602}
{\scshape B.~Lacroix-A-Chez-Toine, S.~N. Majumdar {\normalfont \smfandname}
  G.~Schehr} -- {\og Rotating trapped fermions in two dimensions and the
  complex ginibre ensemble: Exact results for the entanglement entropy and
  number variance\fg}, \emph{Phys. Rev. A} \textbf{99} (2019), p.~021602.

\bibitem{MR0350027}
{\scshape N.~S. Landkof} -- \emph{Foundations of modern potential theory},
  Springer-Verlag, New York-Heidelberg, 1972, Translated from the Russian by A.
  P. Doohovskoy, Die Grundlehren der mathematischen Wissenschaften, Band 180.

\bibitem{Laughlin1987}
{\scshape R.~B. Laughlin} -- {\og Elementary theory: the incompressible quantum
  fluid\fg}, in \emph{The Quantum Hall Effect} (R.~E. Prange {\normalfont
  \smfandname} S.~M. Girvin, \smfedsname), Springer, 1987, p.~233--301.

\bibitem{levesque2000charge}
{\scshape D.~Levesque, J.-J. Weis {\normalfont \smfandname} J.~Lebowitz} --
  {\og Charge fluctuations in the two-dimensional one-component plasma\fg},
  \emph{Journal of Statistical Physics} \textbf{100} (2000), no.~1-2,
  p.~209--222.

\bibitem{MR3732693}
{\scshape M.~Lewin, E.~H. Lieb {\normalfont \smfandname} R.~Seiringer} -- {\og
  Statistical mechanics of the uniform electron gas\fg}, \emph{J. \'{E}c.
  polytech. Math.} \textbf{5} (2018), p.~79--116.

\bibitem{lewin-lieb-seiringer}
\bysame , {\og A floating {W}igner crystal with no boundary charge
  fluctuations\fg}, preprint
  \href{https://arxiv.org/abs/1905.09138v1}{arXiv:1905.09138v1}, 2019.

\bibitem{MR1817225}
{\scshape E.~H. Lieb {\normalfont \smfandname} M.~Loss} -- \emph{Analysis},
  second \smfedname, Graduate Studies in Mathematics, vol.~14, American
  Mathematical Society, Providence, RI, 2001.

\bibitem{Lieb1975}
{\scshape E.~H. Lieb {\normalfont \smfandname} H.~Narnhofer} -- {\og The
  thermodynamic limit for jellium\fg}, \emph{Journal of Statistical Physics}
  \textbf{12} (1975), no.~4, p.~291--310.

\bibitem{MR2583992}
{\scshape E.~H. Lieb {\normalfont \smfandname} R.~Seiringer} -- \emph{The
  stability of matter in quantum mechanics}, Cambridge University Press,
  Cambridge, 2010.

\bibitem{MR1083764}
{\scshape M.~L. Mehta} -- \emph{Random matrices}, second \smfedname, Academic
  Press, Inc., Boston, MA, 1991.

\bibitem{mehta1963statistical}
{\scshape M.~L. Mehta {\normalfont \smfandname} F.~J. Dyson} -- {\og
  Statistical theory of the energy levels of complex systems. v\fg},
  \emph{Journal of Mathematical Physics} \textbf{4} (1963), no.~5, p.~713--719.

\bibitem{mehta1960density}
{\scshape M.~L. Mehta {\normalfont \smfandname} M.~Gaudin} -- {\og On the
  density of eigenvalues of a random matrix\fg}, \emph{Nuclear Physics}
  \textbf{18} (1960), p.~420--427.

\bibitem{MR1606719}
{\scshape D.~Petz {\normalfont \smfandname} F.~Hiai} -- {\og Logarithmic energy
  as an entropy functional\fg}, in \emph{Advances in differential equations and
  mathematical physics ({A}tlanta, {GA}, 1997)}, Contemp. Math., vol. 217,
  Amer. Math. Soc., Providence, RI, 1998, p.~205--221.

\bibitem{MR1986426}
{\scshape B.~Rider} -- {\og A limit theorem at the edge of a non-{H}ermitian
  random matrix ensemble\fg}, \emph{J. Phys. A} \textbf{36} (2003), no.~12,
  p.~3401--3409, Random matrix theory.

\bibitem{MR1485778}
{\scshape E.~B. Saff {\normalfont \smfandname} V.~Totik} -- \emph{Logarithmic
  potentials with external fields}, Grundlehren der Mathematischen
  Wissenschaften [Fundamental Principles of Mathematical Sciences], vol. 316,
  Springer-Verlag, Berlin, 1997, Appendix B by Thomas Bloom.

\bibitem{MR3309890}
{\scshape S.~Serfaty} -- \emph{Coulomb gases and {G}inzburg-{L}andau vortices},
  Zurich Lectures in Advanced Mathematics, European Mathematical Society (EMS),
  Z\"{u}rich, 2015.

\bibitem{MR3888701}
\bysame , {\og Systems of points with {C}oulomb interactions\fg}, \emph{Eur.
  Math. Soc. Newsl.} (2018), no.~110, p.~16--21.

\bibitem{TF9383400678}
{\scshape E.~Wigner} -- {\og Effects of the electron interaction on the energy
  levels of electrons in metals\fg}, \emph{Trans. Faraday Soc.} \textbf{34}
  (1938), p.~678--685.

\bibitem{MR1632875}
{\scshape J.~E. Yukich} -- \emph{Probability theory of classical {E}uclidean
  optimization problems}, Lecture Notes in Mathematics, vol. 1675,
  Springer-Verlag, Berlin, 1998.

\end{thebibliography}

\end{document}